\theoremstyle{definition}
\newtheorem{definition}{Definition}
\newtheorem{remark}{Remark}
\theoremstyle{plain}
\newtheorem{theorem}{Theorem}
\newtheorem{lemma}[definition]{Lemma}
\newtheorem{corollary}{Corollary}
\newtheorem{proposition}{Proposition}
\theoremstyle{remark}
\newcommand{\N}{\mathbb{N}}
\newcommand{\Z}{\mathbb{Z}}
\newcommand{\Q}{\mathbb{Q}}
\newcommand{\R}{\mathbb{R}}
\newcommand{\C}{\mathbb{C}}
\newcommand{\h}{\mathcal{H}} %orthogonaler Halbraum
\newcommand{\M}{\mathcal{M}} %Modulformen
\renewcommand{\P}{\mathbb{P}}
\newcommand{\Ccal}{\mathcal{C}}
\newcommand{\disc}{\operatorname{disc}}
\newcommand{\im}{\operatorname{Im}}
\newcommand{\GL}{\operatorname{GL}}%generelle lineare Gruppe
\newcommand{\Pos}{\mathcal{P}}%positiv definite Matrizen
\newcommand{\gsi}{\Gamma_{S,\infty}}
\newcommand{\T}{^{tr}}%transpose
\newcommand{\EkEll}{E_{k,\text{ell}}}
\newcommand{\EkSing}{E_{k,S}^{(0)}}
\newcommand{\EkSingC}{E_{k,S,c}^{(0)}}
\newcommand{\EkCusp}{E_{k,S}^{\bullet}}
\newcommand{\EkCuspC}{E_{k,S,c}^{\bullet}}
\newcommand{\ConstCusp}{c_{k,S}^{\bullet}}
\newcommand{\GammaD}{\widetilde{\Gamma}_S}
\renewcommand{\mod}{\, \operatorname{mod}}
\newcommand{\dual}{^{\#}}
\let\leq\leqslant
\let\geq\geqslant
\begin{document}

\begin{center}
\begin{huge}
\begin{spacing}{1.0}
\textbf{Fourier coefficients of Eisenstein series for $O^+(2,n+2)$}  
\end{spacing}
\end{huge}

\bigskip
by
\bigskip

\begin{large}
\textbf{Felix Schaps\footnote{Felix Schaps, Lehrstuhl A für Mathematik, RWTH Aachen University, D-52056 Aachen, Germany, felix.schaps@matha.rwth-aachen.de}}
\end{large}
\vspace{0.5cm}\\
Datum
\vspace{1cm}
\end{center}
\begin{abstract}
We derive the Fourier expansion of (scalar-valued) Eisenstein series for $O(2,n+2)$ using classical methods of Siegel, Braun, Zagier, Bruinier and others. We assume that the underlying lattice splits two hyperbolic planes.
Finally we prove for the Eisenstein series at the standard cusp 
that they belong to the Maaß space for $O(2,n+2)$, an analogue of the 'Spezialschar' for Siegel modular forms introduced by Maaß, at least at all local places $p$, where the localization of the underlying lattice is maximal.
\end{abstract}
\noindent\textbf{Keywords:} Orthogonal group, Eisenstein series, Maaß space, Fourier coefficients  \\[1ex]
\noindent\textbf{Classification: 11F30, 11F55}
\vspace{2ex}\\

\newpage
\section{Introduction}

The classical elliptic Eisenstein series
\[
 \EkEll(\tau) = \frac{1}{2\zeta(k)} \cdot \sum_{(\gamma,\delta) \in \Z \times \Z \backslash \{ (0,0)\}} (\gamma \tau + \delta)^{-k}, \, \text{Im}(\tau)>0, k>2,
\]
possesses the Fourier expansion
\[
 \EkEll(\tau) = 1 +  - \frac{2k}{B_k} \cdot \sum_{m=1}^\infty \sigma_{k-1}(m) e^{2\pi i m \tau},
\]
where $B_k$ is the $k$-th Bernoulli number and $\sigma_k(m)$ the weighted divisor sum.

In this paper we examine the Fourier expansion of Eisenstein series for orthogonal groups of signature $(2,n+2)$ splitting two hyperbolic planes.
The Hermitian symmetric space associated with the orthogonal group $O(2,n+2)$ is a Siegel domain of type IV. The attached spaces of modular forms have attracted attention due to Borcherds (cf. \cite{Bo2}).
If $n=1$, the space of modular forms is isomorphic to Siegel modular forms of degree $2$ or paramodular forms, respectively (cf. \cite{K1}); if $n=2$, in some cases the orthogonal groups are isomorphic to a subgroup of the Hermitian modular group of degree $2$ for an imaginary quadratic field, cf. \cite{We}.
The Fourier expansion of Siegel Eisenstein series was first examined in \cite{Si} and Maaß \cite{Maass_Fourier1} proved that the Fourier coefficients satisfy a certain recursion formula. Such modular forms belong to the Maaß space.
A generalization of the Maaß space for $O(2,n+2)$ can be found in \cite{K8}, the Maaß lift or additive lift has been described by Gritsenko (cf. \cite{G4}).

The Fourier expansion of the Eisenstein series for the standard cusp and maximal lattices have already been investigated in \cite{Hirai_Eisenstein} using adeles.
Here, we give a different approach which is similar to the standard approach by Siegel \cite{Si}, also used in \cite{B} and to Bruinier's work \cite{Bruinier_Eisenstein}. 
With this method we are able to examine the Fourier coefficients even for non-maximal lattices. It turns out that the Eisenstein series belong to the Maaß space at every place where the localization is maximal.
It follows that the Eisenstein series considered here are the Maaß lift of Jacobi-Eisenstein series considered in \cite{Mocanu_PhD}.

\section{Modular forms for $O(2,n+2)$}

We start with an even lattice $\Lambda$ in a $\Q$-vector space $V$ of dimension $n$ equipped with a non-degenerate symmetric bilinear form $( \cdot,\cdot )$, i.e. $\Lambda$ is a free group of rank $n$ satisfying $( \lambda,\lambda ) \in 2\Z$ for all $\lambda \in \Lambda$. The dual lattice is given by 
\[
 \Lambda^\sharp:= \{v\in V;\; ( v,\Lambda ) \subseteq \Z\} \supseteq \Lambda.
\]
The associated quadratic form is $Q(v)=\frac{1}{2}(v,v)$.
Throughout the paper we choose a basis of a positive definite lattice $L$ with Gram matrix $S$. Let $\disc L:= \det S$ denote its discriminant. We add two hyperbolic planes over $\Z$, i.e.
\begin{gather*}\tag{1}\label{gl_1}
 \begin{cases}
  & L=\Z^n,\quad S\in\Z^{n\times n} \:\text{positive definite and even}, \\[1ex]
  & L_0 = \Z^{n+2}, \quad S_0 = \left(\begin{smallmatrix}
                                 0 & 0 & 1 \\ 0 & -S & 0 \\ 1 & 0 & 0
                                \end{smallmatrix}\right),   \\[2ex]
  & L_1 = \Z^{n+4},\quad S_1 = \left(\begin{smallmatrix}
                                 0 & 0 & 1 \\ 0 & S_0 & 0 \\ 1 & 0 & 0
                                \end{smallmatrix}\right). 
 \end{cases}
\end{gather*}
Thus $S_1$ has got the signature $(2,n+2)$.

Superscripts $0$ and $1$ always mean that we consider the considered term for the indefinite lattice $L_0$ or $L_1$, respectively, instead of $L$.

We consider the attached real orthogonal group
\begin{align*}
O(S_1;\R)&=\{M\in \R^{(n+4) \times (n+4)};\;M\T S_1 M=S_1\}\\
& =\{M \in \GL(n+4,\R); \, Q_1(Mg)=Q_1(g) \text{ for all } g\in \R^{n+4}\}.
\end{align*}
Let $O^+(S_1;\R)$ stand for the connected component of the identity matrix $I$. Given $M\in O^+(S_1;\R)$ we will always assume the form 
\begin{gather}\label{gl_2}
 M = \begin{pmatrix}
      \alpha & a^{tr} S_0 & \beta \\ b & K & c \\ \gamma & d^{tr} S_0 & \delta
     \end{pmatrix},\; \alpha, \beta, \gamma, \delta \in\R, \;\; a,b,c,d\in\R^{n+2}, \; K\in \R^{(n+2)\times(n+2)}.
\end{gather}
Its inverse is given by
\begin{gather}\label{gl_3}
 M^{-1} = S_1^{-1} M^{tr} S_1 = \begin{pmatrix}
                                 \delta & c^{tr}S_0 & \beta \\ d & S_0^{-1} K^{tr} S_0 & a \\
                                 \gamma & b^{tr} S_0 & \alpha
                                \end{pmatrix}.
\end{gather}
Let $\Gamma_S:=O^+(S_1;\Z)$ denote the subgroup of integral matrices. Note that in this case $a,d\in \Z^{n+2}$ holds in \eqref{gl_2} due to $M^{-1}\in\Gamma_S$ and \eqref{gl_3}.
We define the \emph{discriminant kernel} 
\[
 \widetilde{\Gamma}_S := \{M\in \Gamma_S;\; M\in I + \Z^{(n+4)\times (n+4)} S_1\},
\]
where $I$ is the identity matrix. The discriminant kernel induces the identity on $L^\sharp_1/L_1$, $L^\sharp_1 = S^{-1}_1 \Z^{n+4}$.

At first we adopt a description of the first columns of matrices in $\Gamma_S$ from \cite{KS22}:
\begin{theorem}\label{theorem_1} %%% Theorem 1
 Let $L_1=\Z^{n+4}$ satisfy \eqref{gl_1}. Given $h\in\Z^{n+4}$ the following assertions are equivalent:
 \begin{enumerate}
  \item[(i)] $h$ is the first column of a matrix in $\Gamma_S$ (resp. $\widetilde{\Gamma}_S$).
  \item[(ii)] $h^{tr}S_1$ is the last row of a matrix in $\Gamma_S$ (resp. $\widetilde{\Gamma}_S$).
  \item[(iii)] $Q_1(h)=0$ and $\gcd(S_1 h)=1$.
 \end{enumerate}
\end{theorem}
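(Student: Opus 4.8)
The plan is to establish the cycle of implications $(i)\Rightarrow(ii)\Rightarrow(iii)\Rightarrow(i)$, treating the $\Gamma_S$ and $\widetilde{\Gamma}_S$ cases in parallel, and then to reduce the genuine content to a single arithmetic statement: that a primitive isotropic vector in $L_1$ can be completed to a basis adapted to the hyperbolic structure. The implication $(i)\Rightarrow(ii)$ is essentially formal: if $h$ is the first column of $M\in\Gamma_S$, then by the explicit form of the inverse \eqref{gl_3} the last row of $M^{-1}$ is $(\gamma, b^{tr}S_0, \alpha)$, which read off against $M$ in \eqref{gl_2} shows that $h^{tr}S_1$ is precisely the last row of $M^{-1}$; and $M^{-1}\in\Gamma_S$ (resp.\ $\widetilde{\Gamma}_S$, since the discriminant kernel is a group). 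For $(ii)\Rightarrow(iii)$: if $h^{tr}S_1$ is the last row of $M\in\Gamma_S$, then $h$ is the first column of $M^{-1}S_1^{-1}\cdot S_1 = $ (again using \eqref{gl_3}) a matrix in $\Gamma_S$; the relation $M^{tr}S_1 M = S_1$ forces $Q_1(h)=0$ because the first diagonal entry of $S_1$ is $0$, and it forces $\gcd(S_1 h)=1$ because $S_1 h$ is (up to the first column of $M^{tr}$, which is a row of a $\Z$-unimodular-over-$S_1$ matrix) a primitive vector — more carefully, $h$ being a column of an element of $\Gamma_S\subset\GL(n+4,\Z)$ is itself primitive, and $S_1 h$ primitive then follows from $S_1^{-1}\in\frac{1}{\disc L_1}\Z^{(n+4)\times(n+4)}$ together with the isotropy.

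The heart of the matter is $(iii)\Rightarrow(i)$. Given $h\in\Z^{n+4}$ with $Q_1(h)=0$ and $\gcd(S_1h)=1$, I must build $M\in\Gamma_S$ with first column $h$ (and then, for the discriminant-kernel statement, adjust it to lie in $\widetilde\Gamma_S$). The strategy is the classical Eichler/Witt argument: since $L_1$ contains a hyperbolic plane (in fact two), and $h$ is primitive and isotropic, there exists $h'\in L_1^\sharp$ with $(h,h')=1$; because $S_1$ is unimodular ($\disc L_1 = \disc S_1 = 1$ from \eqref{gl_1}, as the two added hyperbolic planes contribute determinant $\pm1$ — here $\det S_1=\det S_0\cdot(-1)=\det S\cdot(-1)\cdot(-1)=\det S$, so in general $L_1$ is \emph{not} unimodular and one works with $L_1^\sharp$), the sublattice spanned by $h$ and a suitable modification $h''=h'-Q_1(h')h$ of $h'$ is a hyperbolic plane $H$ splitting off: $L_1 = H\perp L_1'$ with $L_1'$ of signature $(1,n+1)$. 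One then checks that $L_1'\cong L_0$ by uniqueness of the Jordan/Witt decomposition over $\Z$ for these lattices (this is where one invokes that the orthogonal complement is forced by the $(1,n+1)$-signature and the structure in \eqref{gl_1}), i.e.\ there is an isometry carrying the standard hyperbolic plane of $L_1$ onto $H$; expressing this isometry as a matrix and checking it preserves the connected component $O^+$ (possibly after composing with a reflection of determinant-type that flips back into $O^+$) yields $M\in\Gamma_S$ with $Me_1 = h$. For the $\widetilde{\Gamma}_S$ refinement, one observes that the construction can be carried out with all auxiliary vectors chosen in $L_1$ (not merely $L_1^\sharp$), so that $M$ acts trivially on $L_1^\sharp/L_1$; alternatively, multiply by an element of $\widetilde\Gamma_S$ correcting the glue-group action, using that $\widetilde\Gamma_S$ acts transitively enough on the relevant data.

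I expect the main obstacle to be the clean identification $L_1' \cong L_0$ after splitting off the hyperbolic plane through $h$ — i.e.\ showing the orthogonal complement is in the same genus \emph{and} the same isometry class as $L_0$, and that the resulting isometry can be normalized to sit in $O^+(S_1;\Z)$ rather than just $O(S_1;\Z)$. A secondary technical point is the orientation/spinor bookkeeping needed to land in the identity component; this is handled by noting that $O^+(S_1;\R)$ has index $2$ in the subgroup preserving orientation and that one always has the freedom to compose with a suitable Eichler transvection (which lies in $O^+$) to fix the component. I would also keep an eye on the case $\disc S\neq 1$, where $S_1$ is non-unimodular: the vector $h'$ with $(h,h')=1$ then lives in $L_1^\sharp$, and one must argue that nonetheless $h$ extends to a \emph{$\Z$-basis} of a primitive hyperbolic plane in $L_1$ — this uses $\gcd(S_1h)=1$ in an essential way, and is precisely the hypothesis (iii) that makes the argument go through.
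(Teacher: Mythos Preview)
The paper does not prove this theorem; it is quoted from \cite{KS22}. So there is no in-paper proof to compare against, and I will assess your plan on its own merits.

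Your formal implications are essentially correct, with one clarification. For $(ii)\Rightarrow(iii)$ the argument you give for $\gcd(S_1 h)=1$ (``$h$ primitive and $S_1^{-1}$ has bounded denominators, together with isotropy'') does not work as written; isotropy plays no role there. The clean argument is simply that, by your own $(i)\Leftrightarrow(ii)$, the vector $S_1 h$ is a column of the transpose of some $N\in\Gamma_S\subset\GL(n+4,\Z)$, hence primitive.

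The substantive issue is in $(iii)\Rightarrow(i)$. You correctly observe that $\gcd(S_1 h)=1$ is exactly the condition guaranteeing an $h'\in L_1$ (not merely $L_1^\sharp$) with $(h,h')_1=1$, so that a hyperbolic plane $H$ through $h$ splits off: $L_1=H\perp L_1'$. But your proposed justification that $L_1'\cong L_0$ ``by uniqueness of the Jordan/Witt decomposition over $\Z$'' is not a theorem: Witt cancellation fails over $\Z$ in general, and there is no canonical Jordan decomposition for indefinite $\Z$-lattices. What is true is that $L_1'$ and $L_0$ have the same signature and discriminant form (since $H$ is unimodular), hence lie in the same genus; to conclude isometry you then need an input of Eichler/strong-approximation type, which you have not invoked. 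You flag this as the expected obstacle, which is accurate, but the resolution you offer does not close the gap.

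A more direct route --- and almost certainly the one taken in \cite{KS22} --- avoids the complement altogether: use Eichler transvections. For $e\in L_1$ isotropic with $(e,L_1)_1=\Z$ and $v\in L_1$ with $(e,v)_1=0$, the transvection
\[
E(e,v):\; x\longmapsto x + (x,e)_1\,v - (x,v)_1\,e - Q_1(v)(x,e)_1\,e
\]
lies in $\widetilde{\Gamma}_S$ (it is unipotent, hence in $O^+$, and acts trivially on $L_1^\sharp/L_1$). Because $L_1$ contains \emph{two} hyperbolic planes, one can use transvections based at the standard isotropic vectors $e_1,e_{n+4}$ and $e_2,e_{n+3}$ to successively kill the coordinates of $h$ and reduce it to $e_1$; the hypothesis $\gcd(S_1 h)=1$ is exactly what makes each reduction step go through over $\Z$. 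This simultaneously handles the connected-component issue and the $\widetilde{\Gamma}_S$ refinement, with no appeal to genus theory or spinor bookkeeping.

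Minor point: your determinant computation gives $\det S_1=\det S$ only for even $n$; in general $\det S_1=(-1)^n\det S$. This does not affect your conclusion that $L_1$ is typically non-unimodular.
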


We consider particular matrices in $\widetilde{\Gamma}_S$:
 \begin{gather}\label{gl_4}
  J= \begin{pmatrix}
     0 &  0 & 0 & 0 & -1 \\ 0 & 0 & 0 & -1 & 0 \\
      0 & 0 & I & 0 & 0 \\ 
      0 & -1 & 0 & 0 & 0 \\
      -1 & 0 & 0 & 0 & 0 \\
     \end{pmatrix}, \quad
  R_K = \begin{pmatrix}
      1 & 0 & 0 \\ 0 & K & 0\\ 0 & 0 & 1
     \end{pmatrix}, R\in O^+(S_0;\Z),
 \end{gather}
\begin{gather}\label{gl_5}
 T_\lambda = \begin{pmatrix}
              1 & -\lambda^{tr} S_0 & -\tfrac{1}{2} \lambda^{tr} S_0 \lambda \\ 0 & I & \lambda \\ 0 & 0 & 1 
             \end{pmatrix}, \quad
 \widetilde{T}_\lambda = \begin{pmatrix}
               1 & 0 & 0 \\ \lambda & I & 0 \\ -\tfrac{1}{2} \lambda^{tr} S_0 \lambda & -\lambda^{tr} S_0 & 1
              \end{pmatrix},\;\;
 \lambda \in \Z^{n+2}.
\end{gather}

Define the positive cone $\Pos_S:=\{ y \in \R^{n+2}; \; Q_0(y)>0, y_1 > 0 \}.$
$O(S_1;\R)$ acts on $\h_S := \left\{ z = x +iy \in \C^{n+2}; \; y \in \Pos_S \right\}$, a Hermitian domain of type IV (cf. \cite{G4}), via
\begin{align}\label{eq:action}
M \langle z \rangle &:= \left(- Q_0(z) b + Kz + c\right) \cdot (j(M,z))^{-1}, \\
\text{ and } j(M,z) &:= - \gamma Q_0(z) + (d,z)_0 + \delta,
\end{align}
if $M$ is given in the decomposition as in (\ref{gl_2}).

There is an isomorphic projective model for the domain, cf. \cite{Bru2}. We consider cusps in this context.
If $\Gamma$ is a subgroup of $\Gamma_S$ of finite index, we denote by
\[
 \Ccal^0(\Gamma):=\bigl\{\pm \Gamma h;\; h\in L^\sharp_1 = S^{-1}_1 \Z^{n+4},\,h^{tr} S_1 h=0,\, \gcd(S_1 h) = 1\bigr\}
\]
the set of $\Gamma$-orbits of \emph{zero-dimensional cusps}.

Moreover we define a subgroup $\gsi$ by
\[ \gsi := \{ M \in \Gamma_S; \; \gamma = 0, \, d=0,\, \delta = 1\},\]
which turns out to be the stabilizer of the standard cusp $e_1=(1,0,\cdots,0)\T$.
Due to Proposition 3 in \cite{K1}, one has the following description:
\begin{proposition}\label{prop:gsi}
Every $M \in \gsi$ has a unique decomposition $M=T_c R_K = R_K T_{K^{-1}c}$ where $K \in O^{+}(S_0;\Z)$ and $c \in \Z^{n+2}$.
In particular, $M$ is of the form
\[
M=\begin{pmatrix}
1 & -c\T S_0 & -Q_0(c) \\ 0 & K & c \\ 0 & 0 & 1
\end{pmatrix}.
\]
\end{proposition}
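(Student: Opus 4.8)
The plan is to read off the shape of an element $M\in\gsi$ directly from the defining relation $M\T S_1 M=S_1$ together with the integrality constraints recorded after \eqref{gl_3}, and then to identify the two factors.

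So fix $M\in\gsi$ in the block form \eqref{gl_2}, i.e. with $\gamma=0$, $d=0$, $\delta=1$. Since $M\in\Gamma_S$ and, by \eqref{gl_3}, the vector $a$ occurs as an honest column of $M^{-1}\in\Gamma_S$, the quantities $\alpha,\beta,a,b,c$ and the block $K$ are all integral. Multiplying out $M\T S_1 M=S_1$ block by block produces a short list of identities, the relevant ones being
\begin{gather*}
K\T S_0 K=S_0,\qquad K\T S_0 b=0,\qquad \alpha+b\T S_0 c=1,\\
S_0 a+K\T S_0 c=0,\qquad 2\beta+c\T S_0 c=0.
\end{gather*}
The first gives $K\in O(S_0;\Z)$ (in particular $\det K=\pm1$, so $K$ is invertible); the second then forces $b=0$, whence $\alpha=1$, $a\T S_0=-c\T S_0 K$, and $\beta=-\tfrac{1}{2}c\T S_0 c=-Q_0(c)$. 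Feeding this back into \eqref{gl_2} and comparing with the explicit matrices \eqref{gl_4}, \eqref{gl_5} shows $M=T_c R_K$ as an identity of matrices.

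It remains to see that in fact $K\in O^+(S_0;\Z)$. Write $R_K=T_{-c}M$; since $T_{-c}\in\widetilde{\Gamma}_S\subseteq O^+(S_1;\R)$ and $M\in O^+(S_1;\R)$ we get $R_K\in O^+(S_1;\R)$. On the other hand, the matrices in $O(S_1;\R)$ fixing both $e_1$ and $e_{n+4}$ are precisely the $R_K=\operatorname{diag}(1,K,1)$ with $K\in O(S_0;\R)$, and by Proposition~3 of \cite{K1} (which one may also verify by hand, keeping track of $\det$ and of the action on the cone $\Pos_S$) intersecting this set with $O^+(S_1;\R)$ yields exactly the $R_K$ with $K\in O^+(S_0;\R)$. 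Hence $K\in O^+(S_0;\Z)$. Conversely, any $T_c R_K$ with $K\in O^+(S_0;\Z)$ and $c\in\Z^{n+2}$ visibly has $\gamma=0$, $d=0$, $\delta=1$, hence lies in $\gsi$.

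Finally, $M=R_K T_{K^{-1}c}$ is the conjugation identity $R_K^{-1}T_\lambda R_K=T_{K^{-1}\lambda}$ evaluated at $\lambda=c$, a one-line matrix computation relying on $K\T S_0 K=S_0$ (equivalently $(K^{-1})\T S_0=S_0 K$ and $Q_0(K^{-1}\lambda)=Q_0(\lambda)$). Uniqueness is then immediate: the central block of $T_c R_K$ is $K$, so $K$ is determined by $M$, and cancelling $R_K$ pins down $T_c$ and hence $c$. The only step that goes beyond bookkeeping with \eqref{gl_2}--\eqref{gl_5} is the component-compatibility used to upgrade $O(S_0;\Z)$ to $O^+(S_0;\Z)$, and that is where I expect the one genuine subtlety to lie.
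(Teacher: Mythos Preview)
Your argument is correct. The paper does not supply its own proof of this proposition; it simply cites Proposition~3 of \cite{K1}, so there is nothing to compare against beyond observing that your direct block computation from $M\T S_1 M=S_1$ is exactly the kind of verification that reference contains. The one point you flag as subtle---upgrading $K\in O(S_0;\Z)$ to $K\in O^+(S_0;\Z)$ via $R_K=T_{-c}M\in O^+(S_1;\R)$---is indeed the only place where something beyond linear algebra enters, and your handling of it (or deferral to \cite{K1}) is appropriate. One minor remark: the displayed matrix in the proposition has $-c\T S_0$ in the $(1,2)$ block, whereas your computation (correctly) gives $-c\T S_0 K$; this is a typo in the statement, not an issue with your proof.
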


The Petersson slash operator for a function $f:\h_S \to \C$ is defined by
\[
f|_k M (z)= j(M,z)^{-k} \cdot f(M\langle z \rangle) \text{ for all } z\in \h_S,
\]
where $M \in O(S_1;\R)$.

We call a holomorphic function $f:\h_S\to\C$ a \emph{modular form} of weight $k$ for a subgroup $\Gamma$ of finite index in $\Gamma_S$, if $f|_k M=f$ for all $M\in \Gamma$.
Such modular forms possess a Fourier expansion of the form
$$f(z) = \sum_{\lambda \in L_0\dual \cap \overline{\Pos_S}} a_f(\lambda) e^{2\pi i (\lambda,z)_0},$$
where $\overline{\Pos_S}$ denotes the topological closure and $\partial \Pos_S$ the boundary.

%%%%%%%%%%%%%%%%%%%%%%%%%%%%%%%%%%%%%%%%%%%%%%%%%%%%%%%%%%%%%%%%%%%%%%%%%%%%%%%%%%%%
%%%%%%%%%%%%%%%%%%%%%%%%%%%%%%%%%%%%%%%%%%%%%%%%%%%%%%%%%%%%%%%%%%%%%%%%%%%%%%%%%%%%
\section{Maximal lattices}
Maximal (even) lattices play an important role while examing orthogonal modular forms, cf. \cite{KS22}.
Let $R$ be a principal ideal domain. An even $R$-lattice $L$ is called \emph{maximal} if for any even lattice $M$ satisfying $L \subset M$ (and the rank of both lattices coincides) we already have $L = M$.

Consider the localizations $L_p=\Z_p^n$ of the lattice $L$ over the quotient field $\Q_p$. We embed the rational number $\Q$ into the local field $\Q_p$ in the natural way. A well-known result (cf. \cite{Voight}) is
\begin{proposition}\label{prop:local}
 The lattice $L$ is maximal if and only if all its localizations $L_p$ are maximal.
\end{proposition}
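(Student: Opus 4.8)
The plan is to prove both implications by contraposition, using the standard local–global dictionary for $\Z$-lattices in the $\Q$-vector space $V$. Two elementary facts carry the argument. First, since $\Z=\bigcap_p\Z_p$ inside $\Q$, an element $x\in V$ lies in $L$ if and only if $x\in L_p$ for every prime $p$; consequently any overlattice $M\supseteq L$ of the same rank has $M/L$ finite and satisfies $M_p=L_p$ for all but the finitely many primes dividing $|M/L|$, and conversely a $\Z$-lattice $M\subseteq V$ is determined by, and may be prescribed through, a family of full $\Z_p$-lattices $M_p\subseteq V\otimes\Q_p$ that coincides with $L_p$ for almost all $p$. Second, evenness is a local condition: by polarisation an even lattice is integral, and $Q(x)\in\Z$ for all $x\in M$ if and only if $Q(x)\in\Z_p$ for all $x\in M_p$ and all $p$, again because a rational number lying in every $\Z_p$ is an integer.

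For ``$L$ maximal $\Rightarrow$ every $L_p$ maximal'' I would assume some $L_p$ fails to be maximal, pick an even $\Z_p$-lattice $N$ with $L_p\subsetneq N\subseteq V\otimes\Q_p$ of the same rank, and glue: let $M$ be the unique $\Z$-lattice in $V$ with $M_q=L_q$ for $q\neq p$ and $M_p=N$. Then $L\subseteq M$, the ranks agree, and $M\neq L$ since $M_p=N\neq L_p$; moreover $M$ is even, because for $x\in M$ one has $Q(x)\in\Z_q$ for every $q$ (using that $L_q$ is even for $q\neq p$ and that $N$ is even), hence $Q(x)\in\Z$. This contradicts the maximality of $L$. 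The converse is the easy direction: if $L$ is not maximal, choose an even $\Z$-lattice $M$ with $L\subsetneq M\subseteq V$ of the same rank; since $L\neq M$, the first fact yields a prime $p$ with $L_p\subsetneq M_p$, and $M_p$ is an even $\Z_p$-lattice of the same rank as $L_p$, so $L_p$ is not maximal.

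The only genuinely non-formal input is the gluing statement — existence and uniqueness of a $\Z$-lattice with prescribed localisations agreeing with a fixed lattice at almost all places — which is classical and is essentially the content referenced to \cite{Voight}. I expect that to be the one step worth spelling out carefully; after it, everything reduces to bookkeeping with $\Z=\bigcap_p\Z_p$ and the evenness check, and no serious obstacle is anticipated beyond keeping track of the ambient spaces in which the various intersections are formed.
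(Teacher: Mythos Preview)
Your argument is correct and is the standard proof of this local--global principle. Note, however, that the paper does not actually give its own proof of this proposition: it is stated as a well-known result with a reference to \cite{Voight}, so there is no in-paper argument to compare against. Your write-up supplies precisely the proof one would expect, and you correctly identify the one substantive ingredient (existence of a $\Z$-lattice with prescribed localisations agreeing with a fixed one almost everywhere) as the point that deserves a citation or a short justification.
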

In the present setting the maximality for the localization can be characterised, cf. \cite[Prop. 1.6.10]{Schaps_Diss}.
\begin{proposition}\label{prop:local_appl}
The local lattice $L_p$ is maximal if and only if
\[
Q(v) \in \Z, \, v \in L\dual
\]
implies that $p \nmid \operatorname{ord}_{L\dual/L}(v)$.
\end{proposition}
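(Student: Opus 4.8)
The plan is to translate the statement into the language of the finite \emph{discriminant form} $q_D\colon D\to\Q/\Z$ of $L$, where $D:=L\dual/L$ and $q_D(v+L):=Q(v)+\Z$; this is well defined because $Q(v+\ell)\equiv Q(v)\pmod{\Z}$ for $\ell\in L$, since $L$ is even and $(v,\ell)\in\Z$ for $v\in L\dual$. First I would prove the intermediate equivalence
\[
 L_p\text{ is maximal}\iff\text{the }p\text{-primary part }D_p\text{ of }D\text{ contains no }x\neq 0\text{ with }q_D(x)=0.
\]
Granting this, the Proposition is pure bookkeeping: for $v\in L\dual$ one has $Q(v)\in\Z$ iff $q_D(v+L)=0$, and writing $v+L=\sum_q x_q$ along $D=\bigoplus_q D_q$ one gets $q_D(v+L)=\sum_q q_D(x_q)$ (the cross terms vanish since $D_q$ and $D_{q'}$ are orthogonal for $q\neq q'$), with each $q_D(x_q)$ lying in the $q$-primary part of $\Q/\Z$; moreover $p\mid\operatorname{ord}_{D}(v+L)$ iff $x_p\neq 0$. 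Hence ``there exists $v\in L\dual$ with $Q(v)\in\Z$ and $p\mid\operatorname{ord}_{D}(v+L)$'' is equivalent to ``there exists $0\neq x\in D_p$ with $q_D(x)=0$'', and by the intermediate equivalence this is exactly the negation of ``$L_p$ is maximal''.

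To prove the intermediate equivalence I argue both implications by contraposition. Assume $L_p$ is not maximal and pick an even overlattice $M_p\supsetneq L_p$ of the same rank. Then $M_p/L_p$ is a nontrivial finite $\Z_p$-module, hence a nontrivial finite abelian $p$-group, so it contains an element of order $p$; choose a representative $v\in M_p$. Since $M_p$ is integral, $v\in L_p\dual$, and since $M_p$ is even, $Q(v)\in\Z_p$. Under the canonical isometry $L_p\dual/L_p\cong D_p$ of torsion quadratic forms --- valid because $L_p\dual=L\dual\otimes\Z_p$ and tensoring the finite group $D$ with $\Z_p$ extracts $D_p$ compatibly with $q_D$ --- the class of $v$ corresponds to some $0\neq x\in D_p$ with $q_D(x)=0$. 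Conversely, given $0\neq x\in D_p$ with $q_D(x)=0$, replace $x$ by $p^{a-1}x$, where $p^{a}=\operatorname{ord}(x)$, so that $\operatorname{ord}(x)=p$ (the new $x$ is still isotropic, as $q_D$ is quadratic), and lift it to $w\in L\dual$; thus $pw\in L$, $w\notin L$ and $Q(w)\in\Z$. The identity $Q(\ell+jw)=Q(\ell)+j(\ell,w)+j^{2}Q(w)\in\Z$ for $\ell\in L$, $j\in\Z$, shows that $M:=L+\Z w$ is an even lattice of the same rank as $L$ that properly contains $L$, and its localization $M_p=L_p+\Z_p w\supsetneq L_p$ is an even $\Z_p$-overlattice of the same rank, so $L_p$ is not maximal.

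The step I expect to be the real obstacle is the upgrade from local to global integrality in the forward direction: the element $v$ produced from $M_p$ a priori only satisfies $Q(v)\in\Z_p$, and one must conclude $Q(v)\in\Z$. This is exactly what the identification $L_p\dual/L_p\cong D_p$ provides --- because $v+L$ lies in the $p$-primary part, the class $q_D(v+L)\in\Q/\Z$ has $p$-power order, hence lies in $\Z[1/p]/\Z$, and $\Z[1/p]/\Z\hookrightarrow\Q_p/\Z_p$; so vanishing of $Q(v)$ modulo $\Z_p$ forces vanishing modulo $\Z$. I would therefore isolate the compatibility of the discriminant form with localization as a short lemma from the standard theory of quadratic lattices and use it in both directions. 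I also note that no special treatment of $p=2$ is needed here, because the evenness of $M$ (resp.\ $M_p$) is guaranteed solely by $Q(w)\in\Z$ (resp.\ $Q(v)\in\Z_p$), which is automatic once we work with a class that is isotropic for $q_D$.
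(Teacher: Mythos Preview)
The paper does not actually prove this proposition; it merely cites \cite[Prop.~1.6.10]{Schaps_Diss}. Your argument is correct and follows the standard route via the discriminant form: the primary decomposition of $D=L\dual/L$ together with the orthogonality of distinct $q$-primary components reduces the global condition to the existence of a nonzero isotropic element in $D_p$, and the identification $D_p\cong L_p\dual/L_p$ as torsion quadratic forms (with $\Z[1/p]/\Z\hookrightarrow\Q_p/\Z_p$) handles the passage between $\Z$-integrality and $\Z_p$-integrality. The construction of the even overlattice $M=L+\Z w$ from an order-$p$ isotropic class, and conversely the extraction of such a class from any strictly larger even $M_p$, are both clean; your observation that no special care is needed at $p=2$ is also correct, since evenness of $M$ follows from $Q(w)\in\Z$ and $(w,\ell)\in\Z$ alone.
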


The order of an element of the discriminant group is a divisor of $\det(S)$. Hence, the localizations $L_p$ are all maximal except of finitely many ones. We formulate it precisely:
\begin{corollary}
Let $p\nmid \det(S)$, then $L_p$ is maximal.
\end{corollary}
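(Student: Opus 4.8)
This is immediate from Proposition \ref{prop:local_appl} combined with the observation recorded just before the corollary. The plan is as follows. I would fix a prime $p$ with $p \nmid \det(S)$ and take an arbitrary $v \in L\dual$ with $Q(v) \in \Z$; by Proposition \ref{prop:local_appl} it then suffices to verify that $p \nmid \operatorname{ord}_{L\dual/L}(v)$. Since $S$ is a positive definite Gram matrix of $L$, the discriminant group $L\dual/L$ is finite of order $\det(S)>0$, so by Lagrange's theorem the order of the class $v + L$ divides $\det(S)$. As $p \nmid \det(S)$ by assumption, this already gives $p \nmid \operatorname{ord}_{L\dual/L}(v)$, and Proposition \ref{prop:local_appl} yields that $L_p$ is maximal. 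Alternatively one could route through Proposition \ref{prop:local}, but the local criterion of Proposition \ref{prop:local_appl} is exactly what is needed and makes the detour unnecessary.

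I do not expect any obstacle here: the only input beyond the cited proposition is the standard fact that $L\dual/L$ has order $\det(S)$, which is precisely the remark already stated above the corollary. Note that the hypothesis $Q(v)\in\Z$ plays no role in the argument — the divisibility conclusion holds for every $v\in L\dual$ — but I keep it in order to invoke Proposition \ref{prop:local_appl} in its stated form.
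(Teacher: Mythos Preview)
Your argument is correct and matches the paper's intended reasoning: the corollary is presented without a separate proof, being an immediate consequence of Proposition~\ref{prop:local_appl} together with the preceding remark that every element order in $L\dual/L$ divides $\det(S)$. Your observation that the hypothesis $Q(v)\in\Z$ is not actually needed is also accurate.
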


\section{The Maaß space}
Maaß \cite{Maass_Fourier} observed that the Fourier coefficients of Siegel Eisenstein series of degree $2$ satisfy
an arithmetic recursion. Functions satisfying such a relation belong to the so-called Maaß space. Maaß originally called it the "Spezialschar".
In \cite{K8} there is a generalization to modular forms for the orthogonal group.

From here on, we identify $L_0\dual \cong \Z \times S^{-1}\Z^n \times \Z$ in the context of Fourier coefficients of vectors.
\begin{definition}\label{def:Maass_space}
Let $\Gamma$ be a subgroup of $\Gamma_S$ of finite index. The \emph{Maaß space} $\M_k^{*}(\Gamma)$ consists of all $f\in \M_k(\Gamma)$ satisfying
\[
a_f(\lambda) = \sum_{d|\gcd(S_0 \lambda)} d^{k-1} a_f\left(\frac{lm}{d^2},\frac{\mu}{d},1\right) \text{ for all } \lambda= (l,\mu,m) \in L_0\dual \cap \overline{\Pos_S} \backslash \{ 0 \}.
\]
\end{definition}

There is a local-global principle for maximal lattices, i.e. a lattice is maximal if and only if it is maximal for all localizations (see Proposition \ref{prop:local}). A similar local-global principle is true for the Maaß space.
\begin{lemma}
 A modular form $f\in \M_k(\Gamma)$ belongs to $\M_k^{*}(\Gamma)$ if and only if for all primes $p \in \P$ the following identity holds
\begin{align*}\label{eq:local_Maass}
a_{f}(\lambda) = a_{f}(p^t l,\mu,m) + p^{k-1} a_{f}\left(\frac{1}{p} \lambda\right), \text{ for all $\lambda=(l,\mu,p^t m)\in L_0\otimes_\Z \Q$, $t \in \N$, $p\nmid m$.}
\end{align*}
Here $a_f(\lambda)$ is extended to $\lambda\in L_0\otimes_\Z \Q$ by setting $a_f(\lambda)=0$, if the value was not already defined.
\end{lemma}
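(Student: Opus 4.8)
The plan is to prove the two implications separately: the forward one (Maaß relation $\Rightarrow$ local relations) by substituting the Maaß relation, and the reverse one by induction on the \emph{content} $N:=\gcd(S_0\lambda)$ of $\lambda$. First some bookkeeping. Writing $\lambda=(l,\mu,m)$ one has $S_0\lambda=(m,-S\mu,l)$, hence $\gcd(S_0\lambda)=\gcd\bigl(l,\gcd(S\mu),m\bigr)$, and for a positive integer $d$ the scaled vector $\tfrac1d\lambda$ lies in $L_0^\sharp$ exactly when $d\mid\gcd(S_0\lambda)$. Moreover $Q_0(\tfrac1p\lambda)=\tfrac1{p^2}Q_0(\lambda)$ and $Q_0(p^tl,\mu,m)=Q_0(\lambda)$, the relevant first coordinates having the same sign, so every vector occurring in either relation lies in $\overline{\Pos_S}$ as soon as $\lambda$ does; the only boundary vectors with vanishing last coordinate are the isotropic $\lambda=(l,0,0)$, which I postpone. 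Throughout, $a_f$ is extended by $0$ outside $L_0^\sharp\cap\overline{\Pos_S}$, so in particular $a_f(\tfrac1p\lambda)=0$ whenever $p\nmid\gcd\bigl(l,\gcd(S\mu)\bigr)$.

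For the forward direction, fix a prime $p$ and $\lambda=(l,\mu,p^tm)$ with $p\nmid m$; the case $t=0$ is vacuous, so let $t\geq 1$. If $p\nmid\gcd\bigl(l,\gcd(S\mu)\bigr)$ then $a_f(\tfrac1p\lambda)=0$, and $\lambda$ and $(p^tl,\mu,m)$ have the same content (trivial $p$-part, identical prime-to-$p$ part), so Definition \ref{def:Maass_space} applied to each gives the same divisor sum term by term and thus $a_f(\lambda)=a_f(p^tl,\mu,m)$. If $p\mid\gcd\bigl(l,\gcd(S\mu)\bigr)$, put $g:=v_p(N)\geq 1$ and let $N'$ be the prime-to-$p$ part of $N$; then $(p^tl,\mu,m)$ has content $N'$ and $\tfrac1p\lambda$ has content $p^{g-1}N'$. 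Expand the Maaß relation for $\lambda$ and group the divisors $d\mid p^gN'$ by $j:=v_p(d)$: the terms with $j=0$ reproduce the Maaß relation for $(p^tl,\mu,m)$ verbatim, and after the shift $j\mapsto j-1$ — under which the divisors $p^{j-1}e$, $e\mid N'$, run exactly through those of $p^{g-1}N'$ — the terms with $j\geq 1$ reproduce $p^{k-1}$ times the Maaß relation for $\tfrac1p\lambda$. Adding yields the asserted local identity.

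For the reverse direction, assume all local identities hold and induct on $N$. If $N=1$ I sub-induct on the number of prime factors of the last coordinate $m$: for $m=1$ the Maaß relation is trivial, and otherwise, choosing $p\mid m$ and writing $m=p^tm'$ with $p\nmid m'$, the local relation at $p$ reads $a_f(\lambda)=a_f(p^tl,\mu,m')+p^{k-1}a_f(\tfrac1p\lambda)$; its last term vanishes since $p\nmid\gcd\bigl(l,\gcd(S\mu)\bigr)$, while $(p^tl,\mu,m')$ again has content $1$ with strictly fewer prime factors in its last coordinate, so the sub-induction gives $a_f(p^tl,\mu,m')=a_f(p^tlm',\mu,1)=a_f(lm,\mu,1)$, which is the Maaß relation for $\lambda$. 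If $N>1$, pick $p\mid N$ and, with $g,N',m',t$ as above, use the local relation at $p$ to express $a_f(\lambda)$ through $a_f(p^tl,\mu,m')$ and $a_f(\tfrac1p\lambda)$; their contents $N'$ and $p^{g-1}N'$ are both strictly smaller than $N$, so by the induction hypothesis each equals its Maaß sum, and reassembling these two sums — precisely the reverse of the grouping used above — produces $\sum_{d\mid N}d^{k-1}a_f(lm/d^2,\mu/d,1)$.

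It remains to deal with the postponed vectors $\lambda=(l,0,0)$, $l\geq 1$, which are reached by no local relation. For these the local relations specialise to the one-variable recursion $a_f(0,0,p^tm)=a_f(0,0,m)+p^{k-1}a_f(0,0,p^{t-1}m)$ ($p\nmid m$), whose unique solution with prescribed value $a_f(0,0,1)$ is $a_f(0,0,l)=\sigma_{k-1}(l)\,a_f(0,0,1)$; combined with the identity $a_f(l,0,0)=a_f(0,0,l)$ — modularity under the swap $R_{K_0}\in\gsi$ of the two isotropic directions of $L_0$ (Proposition \ref{prop:gsi}) — this is exactly the Maaß relation at $(l,0,0)$. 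The step I expect to be the main obstacle is the combinatorial bookkeeping in the two groupings: one must follow the $p$-adic valuations of $l$, $\gcd(S\mu)$ and $m$ closely to confirm that the contents behave as stated and that the reindexed divisor sums line up exactly, and one must apply the vanishing convention for $a_f$ consistently so that the out-of-range terms on the two sides genuinely cancel.
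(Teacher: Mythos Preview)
Your argument is essentially the paper's: expand the Maa\ss\ sum and split divisors by their $p$-part for ``$\Rightarrow$'', then induct for ``$\Leftarrow$''. The paper's induction variable is $\sigma_0(m)$ rather than your content $N=\gcd(S_0\lambda)$; this is marginally cleaner because it eliminates your sub-induction in the base case (when $m=1$ the Maa\ss\ relation is tautological, and for $m>1$ one application of the local relation at any $p\mid m$ strictly decreases $\sigma_0$ for \emph{both} $(p^tl,\mu,m')$ and $\tfrac1p\lambda$, so a single induction suffices).

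One genuine point to repair: the coordinate swap $K_0$ you invoke for the boundary vectors $(l,0,0)$ interchanges two basis vectors and hence has determinant $-1$, so $K_0\notin O^+(S_0;\Z)$ and Proposition~\ref{prop:gsi} does not place $R_{K_0}$ in $\gsi$ as you claim. You need either to compose with a reflection in the definite block to land in $O^+(S_0;\Z)$ and then check the resulting $R_K$ actually lies in $\Gamma$, or to record this as an implicit hypothesis on $\Gamma$. The paper's terse proof does not treat the case $m=0$ at all (its induction on $\sigma_0(m)$ simply does not start there), so you have been more careful in even noticing the issue; just make sure the element you use really is available.
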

This is called the \emph{local Maaß condition}.
\begin{proof}
Noting that
\begin{align*}
\sum_{d|\gcd(S_0 \lambda)} d^{k-1} a_f\left(\frac{lm}{d^2},\frac{\mu}{d},1\right)
=\sum_{d|m} d^{k-1} a_f\left(\frac{lm}{d^2}, \frac{\mu}{d}, 1\right),
\end{align*}
gives "$\Rightarrow$". For "$\Leftarrow$", use the same identity. The claim follows by induction on $N:=\sigma_0(m)$.
\end{proof}

\section{Eisenstein series for different cusps}

For every $0$-dimensional cusp we can attach an Eisenstein series.
The procedure is comparable with \cite{Elstrodt_hyperbolic}.
Let $\GammaD \subset \Gamma \subset \Gamma_S$ and $c$ be a $0$-dimensional cusp for $\Gamma$. Then we define the stabilizer of the cusp $c$ by
\[
\Gamma_c := \{ M \in \Gamma; \, M c = c\}.
\]
The standard cusp is $e_1:=(1,0,\cdots,0)\T$, of course.
Obviously we have
\[
\Gamma_{e_1}=\{ M \in \Gamma; \, \alpha = 1, b=0, \gamma=0\}.
\]
Due to the form in Proposition \ref{prop:gsi} we already saw that
\[
\Gamma_{e_1} = \gsi \cap \Gamma.
\]
Now let $c$ be an arbitrary $0$-dimensional cusp for $\Gamma$.
Then there exists an $N \in O^{+}(S_1;\Q)$ with $N c = e_1$. A $\Gamma$-invariant function can be defined by
\[
\sum_{M: \Gamma_{c}\backslash \Gamma} 1|_k(NM),
\]
as long as this function converges absolutely.
If $M$ runs through a system of representatives in $\Gamma_{c}\backslash \Gamma$ then $NMN^{-1}$ runs through a system of $(N\Gamma N^{-1})_{e_1} \backslash (N \Gamma N^{-1})$.
Hence, for this function,
\begin{align}\label{eq:Eisenstein_cusp_def}
\sum_{M: \Gamma_{c}\backslash \Gamma} 1|_k(NM) = \sum_{M:(N\Gamma N^{-1})_{\infty} \backslash (N \Gamma N^{-1})} 1|_k(MN)
\end{align}
is well-defined.
The series is the same for two $\Gamma$-equivalent $0$-dimensional cusps.

We make this more explicit by considering $\Gamma= \GammaD$. Due to \cite{Schaps_Diss}, we can choose representatives of the cusps in the form
\begin{align}\label{eq:standardform_cusp}
c=\begin{pmatrix}
1 \\ 0 \\ \mathfrak{c} \\ 0 \\ Q(\mathfrak{c})
\end{pmatrix}, \, \mathfrak{c}\in L\dual, Q(\mathfrak{c}) \in \Z.
\end{align}
Also we can choose
$\widetilde{T}^{-1}_c$
for $N$, such that $\widetilde{T}_c e_1 = c$.

Thus, we end up with the following definition:
\begin{definition}
Let $c$ be a $0$-dimensional cusp given in the form of (\ref{eq:standardform_cusp}).
For $k>n+2$ we define the Eisenstein series for the orthogonal group $\GammaD$ attached to the cusp $c$ as
\[
E_{k,S,c}:= \frac{1}{2} \sum_{M:\widetilde{\Gamma_S}_{,c}\backslash \widetilde{\Gamma_S}} 1|_k(\widetilde{T}^{-1}_c M).
\]
\end{definition}
The series converges absolutely if and only if $k>n+2$, cf. \cite{Schaps_Diss}.

Every $0$-dimensional cusp is $\GammaD$-equivalent to one in the form of (\ref{eq:standardform_cusp}). 
However, being more precise, in each cusp class in $\mathcal{C}^{0}(\widetilde{\Gamma}_S)$ the elements are $\GammaD$-equivalent only up to a sign.
First, noting that $\Gamma_c = \Gamma_{-c}$, we get
\[
 E_{k,S,-c}= \frac{1}{2} \sum_{M:\widetilde{\Gamma_S}_{,-c}\backslash \widetilde{\Gamma_S}} 1|_k((-\widetilde{T}^{-1}_c) M) = \frac{1}{2} \sum_{M:\widetilde{\Gamma_S}_{,c}\backslash \widetilde{\Gamma_S}} \left((-1)^k 1|_k(\widetilde{T}^{-1}_c M)\right)= (-1)^k E_{k,S,c}.
\]
This motivates why we defined the equivalence of cusps just up to sign.

From the way we defined the Eisenstein series, we directly read off the modularity of $E_{k,S,c}$. However, for computing and later purposes an explicit form is useful and can be derived.
\begin{theorem}\label{thm:Eisenstein_explicit}
For each cusp $c$ of the form (\ref{eq:standardform_cusp}) and $k>n+2$,
\[
E_{k,S,c}(z) = \frac{1}{2} \sum_{\begin{subarray}{c}g \in L_1+c \\ \gcd(S_1 g) = 1\\ Q_1(g)=0\end{subarray}} j(g,z)^{-k},
\]
where $j(g,z):=j(M^{-1},z)$, if $g$ is the first column of $M^{-1}$, i.e. \\$j(g,z)= -\gamma Q_0(z)+(d,z)_0 + \delta$, $g=\begin{pmatrix}\delta \\ d \\ \gamma \end{pmatrix}$.

Moreover $E_{k,S,c} \in \M_k(\widetilde{\Gamma_S}).$
\end{theorem}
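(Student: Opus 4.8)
The plan is to convert the abstract quotient-sum definition of $E_{k,S,c}$ into the explicit lattice sum by unwinding the cosets $\widetilde{\Gamma_S}_{,c}\backslash\widetilde{\Gamma_S}$ in terms of first columns of matrices. First I would conjugate by $\widetilde{T}_c$: since $\widetilde{T}_c e_1 = c$, the group $\widetilde{T}_c^{-1}\widetilde{\Gamma_S}\,\widetilde{T}_c$ has $\widetilde{T}_c^{-1}\widetilde{\Gamma_S}_{,c}\widetilde{T}_c$ as the stabilizer of $e_1$, and by \eqref{eq:Eisenstein_cusp_def} the sum $\sum_{M:\widetilde{\Gamma_S}_{,c}\backslash\widetilde{\Gamma_S}}1|_k(\widetilde{T}_c^{-1}M)$ equals $\sum_{M':(\widetilde{T}_c^{-1}\widetilde{\Gamma_S}\widetilde{T}_c)_{e_1}\backslash(\widetilde{T}_c^{-1}\widetilde{\Gamma_S}\widetilde{T}_c)}1|_k(M'\widetilde{T}_c)$; then $1|_k(M'\widetilde{T}_c)(z) = j(M'\widetilde{T}_c,z)^{-k}$ and the cocycle relation $j(M'\widetilde{T}_c,z)=j(M',\widetilde{T}_c\langle z\rangle)\,j(\widetilde{T}_c,z)$ should reduce matters to a sum over the conjugated group against $\widetilde{T}_c\langle z\rangle$, but it is cleaner to work directly with $\widetilde{\Gamma_S}$.

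The key observation is that $M\mapsto$ (first column of $M^{-1}$) is the right object: by Theorem \ref{theorem_1}, $h$ is the last row of a matrix in $\widetilde{\Gamma_S}$ iff $h^{tr}S_1$ — equivalently $h=S_1 g$ with $Q_1(g)=0$, $\gcd(S_1g)=1$ — and the inverse formula \eqref{gl_3} shows the first column of $M^{-1}$ is $g=(\delta,d,\gamma)^{tr}$ while $j(g,z)=j(M^{-1},z)=-\gamma Q_0(z)+(d,z)_0+\delta$. So I would identify the coset space $\widetilde{\Gamma_S}_{,c}\backslash\widetilde{\Gamma_S}$ with the set of possible first columns of $M^{-1}\widetilde{T}_c^{-1}$ (or, after the conjugation above, of matrices in the conjugated group), and check that two matrices give the same $\widetilde{\Gamma_S}_{,c}$-coset iff these first columns agree. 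The point is that $\widetilde{\Gamma_S}_{,c} = \widetilde{T}_c\,\gsi\,\widetilde{T}_c^{-1}\cap\widetilde{\Gamma_S}$ acts on the left and, by Proposition \ref{prop:gsi}, $\gsi$ consists exactly of the matrices fixing $e_1$; translating through $\widetilde{T}_c$, left multiplication by $\widetilde{\Gamma_S}_{,c}$ fixes the relevant column vector. Running over cosets is then the same as running over $g\in S_1^{-1}\Z^{n+4}$ with $g-c\in$ appropriate lattice, $Q_1(g)=0$, $\gcd(S_1g)=1$; the translation by $\widetilde{T}_c$ shifts the base point from $L_1$ to $L_1+c$, which is why the summation condition reads $g\in L_1+c$.

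Once the index set is identified, the summand $1|_k(\widetilde{T}_c^{-1}M)(z)$ must be shown to equal $j(g,z)^{-k}$ with $g$ the first column of $(\widetilde{T}_c^{-1}M)^{-1}=M^{-1}\widetilde{T}_c$; this is a direct computation using \eqref{gl_3} for the inverse and the definition of $j$, plus the cocycle identity to absorb the $\widetilde{T}_c$ factor. I would also need to double-check the normalization: the factor $\tfrac12$ in the definition matches the $\pm$ ambiguity (a column $g$ and $-g$ give $\widetilde{\Gamma_S}$-equivalent data but $j(-g,z)^{-k}=(-1)^kj(g,z)^{-k}$), consistent with the earlier computation $E_{k,S,-c}=(-1)^kE_{k,S,c}$, so the lattice sum over all $g$ (not just up to sign) with the $\tfrac12$ is the right count. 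Finally, $E_{k,S,c}\in\M_k(\widetilde{\Gamma_S})$ follows because the original averaged series is manifestly $\widetilde{\Gamma_S}$-invariant by construction (left-translation permutes the cosets) and holomorphic and of moderate growth by the absolute convergence for $k>n+2$ cited from \cite{Schaps_Diss}; in the explicit form, $\widetilde{\Gamma_S}$-invariance is the statement that $M_0\in\widetilde{\Gamma_S}$ permutes the set $\{g\in L_1+c:\gcd(S_1g)=1,Q_1(g)=0\}$ compatibly with the cocycle, which again is Theorem \ref{theorem_1} together with the cocycle relation for $j$. The main obstacle I anticipate is the bookkeeping in the coset-to-column bijection — precisely verifying that the left $\widetilde{\Gamma_S}_{,c}$-orbits correspond bijectively to admissible columns $g$ with no overcounting, and getting the lattice coset $L_1+c$ (rather than $L_1$ or $L_1^\sharp$) exactly right under the conjugation by the rational matrix $\widetilde{T}_c$.
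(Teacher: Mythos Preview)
Your proposal is correct and reaches the same identification $g=M^{-1}c$ as the paper, but the route is more circuitous than necessary. The paper avoids the conjugation by $\widetilde{T}_c$ entirely: it simply observes that $j(\widetilde{T}_c^{-1}M,z)$ depends only on the last row of $\widetilde{T}_c^{-1}M$, and since the last row of $\widetilde{T}_c^{-1}$ is $c^{tr}S_1$, that last row is $c^{tr}S_1M=(S_1^{-1}M^{tr}S_1c)^{tr}S_1=(M^{-1}c)^{tr}S_1$, giving $j(\widetilde{T}_c^{-1}M,z)=j(M^{-1}c,z)$ in one line. The condition $g\in L_1+c$ then drops out immediately from $M^{-1}\in\widetilde{\Gamma}_S$ acting trivially on $L_1^\sharp/L_1$, with no rational conjugation to track; and the bijection between cosets and admissible vectors is pure orbit--stabilizer ($M_1^{-1}c=M_2^{-1}c\iff M_2M_1^{-1}\in\widetilde{\Gamma}_{S,c}$), so Proposition~\ref{prop:gsi} is not needed. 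What your detour through \eqref{eq:Eisenstein_cusp_def} and the first-column description buys is a more structural picture (you are effectively re-deriving the orbit map via Theorem~\ref{theorem_1}), but at the cost of the extra bookkeeping you flag at the end. The one substantive point both approaches must address --- and which you correctly anticipate as the obstacle --- is \emph{surjectivity}: that every $g\in L_1+c$ with $Q_1(g)=0$ and $\gcd(S_1g)=1$ actually lies in the orbit $\widetilde{\Gamma}_S\cdot c$. The paper handles this tersely by invoking the classification of zero-dimensional cusps for $\widetilde{\Gamma}_S$ from \cite{Schaps_Diss} (two such $g$ in the same coset mod $L_1$ are $\widetilde{\Gamma}_S$-equivalent); you should cite the same fact rather than try to extract it from the conjugation.
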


\begin{proof}
Using the absolute convergence, we may reorder the series.

The term $1|_k(\widetilde{T}^{-1}_c M)$ is equal to $j(\widetilde{T}^{-1}_c M,z)^{-k}$. The automorphy factor only depends on the last row. So, if $h\T S_1$ is the last row of an $N\in O(S_1;\R)$,
then the automorphy factor reads $j(N,z) = (h,Z)_1=j(h,z)$, where \[
Z= \begin{pmatrix}
-Q_0(z) \\ z \\ 1
\end{pmatrix}
\]
is the standard representative of $z$ in the projective model.

The last row of $\widetilde{T}^{-1}_c$ is $c\T S_1$.
Hence,
$$1|_k(\widetilde{T}^{-1}_c M) = ((({c}\T S_1 M)\T S_1^{-1},Z)_1)^{-k}= ((S_1^{-1} M\T S_1 {c},Z)_1)^{-k}=((M^{-1} {c},Z)_1)^{-k}.$$

For the claimed formula, set $g:=M^{-1} {c}$. The vector $g$ satisfies $$Q_1(g)=Q_1(M g)=Q_1({c})=0$$
and $\gcd(S_1 g) = \gcd(M\T S_1 {c}) = \gcd(S_1 {c})=1$.
Since $M \in \GammaD$, it follows $M^{-1}c = \lambda + c$ for a suitable $\lambda \in L_1$.

Over all, we proved so far $$E_{k,S,c}(z) = \frac{1}{2} \sum_{M:\widetilde{\Gamma}_{S,c}\backslash \widetilde{\Gamma_S}} j(M^{-1}c,z)^{-k}.$$
The vectors $g=M^{-1} c$ appear in the claimed series. It remains to be proved that every summand in the assertion appears exactly once.

By definition of the cosets, the orbit is
\begin{align} \GammaD c = \bigcup_{M \in \GammaD} \{ M^{-1} c \} &= \bigcup_{N \in \widetilde{\Gamma}_{S,c}} \bigcup_{M:\widetilde{\Gamma}_{S,c}\backslash \widetilde{\Gamma_S}}  \{M^{-1} N^{-1} c\} 
= \bigcup_{M:\widetilde{\Gamma}_{S,c}\backslash \widetilde{\Gamma_S}}  \{M^{-1} c\},\label{eq:union_orbit} \end{align}
since $N\in \widetilde{\Gamma}_{S,c}$.
If $M_1^{-1} c = M_2^{-1} c$, it follows that $(M_2 M^{-1})c =c$. As a consequence, $M_2 M^{-1} \in \widetilde{\Gamma}_{S,c}$ and $\widetilde{\Gamma}_{S,c} M_1 = \widetilde{\Gamma}_{S,c} M_2$.
Thus, the union on the right in (\ref{eq:union_orbit}) is disjoint.
Representatives of the orbits are given by (\ref{eq:standardform_cusp}), for example.
\end{proof}

We simply denote $E_{k,S} := E_{k,S,e_1}$ for the standard cusp. A slightly different Eisenstein series at the cusp $c$ is
\begin{align}\label{eq:Eisenstein_tilde_cusp}
E_{k,S,c}^{*}(z) := \frac{1}{2\zeta(k)}\sum_{\begin{subarray}{c}g\in L_1 + c\backslash \{0\} \\ Q_1(g)=0\end{subarray}} j(g,z)^{-k}.
\end{align}
Whenever $L$ is maximal, there is only one class of zero-dimensional cusps. From \cite{KS22}, it follows that
\[
E_{k,S}^{*}(z) = \frac{1}{2\zeta(k)}\sum_{\begin{subarray}{c}g\in L_1\dual\backslash \{0\} \\ Q_1(g)=0\end{subarray}} j(g,z)^{-k} = \frac{1}{2}\sum_{\begin{subarray}{c}g\in L_1\dual \\ \gcd(S_1 g)=1\\ Q_1(g)=0\end{subarray}} j(g,z)^{-k}= E_{k,S},
\]
since $j(m g,z)=m \cdot j(g,z)$ for all $m \in \Z$.
Hence the different Eisenstein series coincide.
Note that the Eisenstein series $E_{k,S,c}$ and $E_{k,S,c}^{*}$ do not coincide in general, as long as the lattice $L$ is not maximal.

Given $g\in L_1+c$ with $Q_1(g)=0$, we have $g/\gcd(S_1 g) \in L_1\dual$. Hence, $g/\gcd(S_1 g)$ is equivalent to a $0$-dimensional cusp $c^{*}$ which does not necessarily equal $c$, on the other hand.
Let $h\in L_1+c^{*}$, $N_{c^{*}}:=\operatorname{ord}_{L\dual/L}(c^{*})$.
If there exists a $\beta$ such that $\beta h\in L_1+c$, then
$$ (\beta + N_{c^{*}}) h = \beta h + N_{c^{*}} h \in L_1+c.$$
Furthermore $-h \in L_1-c^{*}$ then admitts $-\beta (-h)\in L_1+c$.
As a consequence, all such $\beta$ form cosets $\pm t_{c^{*}} \mod N_{c^{*}}$ for a suitable $t_{c^{*}}$. We get
\begin{align*}
 2\zeta(k) E_{k,S,c}^{*} &= \sum_{\alpha = 1}^\infty \sum_{\begin{subarray}{c}\alpha h \in L_1 +c\\
                                                                  Q_1(h)=0\\ \gcd(S_1 h) = 1
                                                                  \end{subarray}} j(\alpha h,z)^{-k}\\
 &=\sum_{c^{*}:\Ccal^0(\GammaD)} \sum_{\alpha \equiv \pm t_{c^{*}} \mod N_{c^{*}}}  \sum_{\begin{subarray}{c} h \in L_1 + c^{*}\\
                                                                  Q_1(h)=0\\ \gcd(S_1 h) = 1
                                                                  \end{subarray}} \alpha^{-k} j(h,z)^{-k}.
\end{align*}
Thus, the general relation between both Eisenstein series is
\begin{align}\label{eq:DiffEis}
E_{k,S,c}^{*} = \sum_{c^{*}:\Ccal^0(\GammaD)} \left( \frac{1}{\zeta(k)}\sum_{\alpha \equiv \pm t_{c^{*}} \mod N_{c^{*}}}\alpha^{-k} \right) E_{k,S,c^{*}},
\end{align}
where the inner (Dirichlet) series is set to $0$, if $t_{c^{*}}$ does not exist.

\section{Calculating the Fourier Coefficients of the singular term}
We decompose the vectors $g\in L_1\dual$ appearing in the Eisenstein series as above:
$$ g = \begin{pmatrix}
        \delta \\ d \\ \gamma
       \end{pmatrix}.$$

Split $E_{k,S,c}$ into two parts. Let $\EkSingC$ denote the singular term which is the contribution from the summands with $\gamma = 0$ in Theorem \ref{thm:Eisenstein_explicit}.
The second part $\EkCuspC$ stands for the summands where $\gamma \neq 0$ and we will call this the cusp term.
The first observation is that both parts, singular and cusp part of the series, admit a Fourier expansion, since
\[
T_\lambda g = \begin{pmatrix}
1 & -\lambda\T S_0 & -Q_0(\lambda) \\
0 & I^{(n)} & \lambda \\ 0 & 0 & 1
\end{pmatrix} \cdot \begin{pmatrix}
* \\ * \\ \gamma
\end{pmatrix} = \begin{pmatrix}
* \\ * \\ \gamma
\end{pmatrix}.
\]
The cocyle relation of the factor of automorphy $j(\cdot,\cdot)$ implies
$$j(g,z)^{-k}|_k T_\lambda = j(T_\lambda,z)^{-k}\cdot j(g, T_\lambda\langle z \rangle)^{-k} = j(T_\lambda^{-1}g,z)= j(T_{-\lambda} g,z).$$

When $g$ runs through the complete set $\{ g \in L_1+c; \, Q_1(g)=0, \gcd(S_1 g) = 1\}$, then $T_\lambda^{-1} g$ does so, too, since $T_\lambda \in \GammaD$
for $\lambda \in \Z^{n+2}$. Consequently, we have 
\[ \EkSingC(z) = \EkSingC(T_\lambda\langle z \rangle) = \EkSingC(z+\lambda)\] and similarly $\EkCuspC(z) = \EkCuspC(z+\lambda)$ for all $\lambda\in \Z^{n+2}$.

Now we calculate its Fourier expansion. 
First of all, we consider the singular term
\begin{align*}
 {E_{k,S,c}^{*}}^{(0)}(z) &= \frac{1}{2\zeta(k)}\sum_{\begin{subarray}{c} g\in L_1+c \backslash\{0\} \\ Q_1(g)=0 \\ \gamma = 0 \end{subarray}} j(g, z)^{-k}\\
 &=\delta_{c=e_1} + \frac{1}{2\zeta(k)} \sum_{\delta\in \Z} \sum_{\begin{subarray}{c} d\in L_0+\tilde{c}\backslash\{0\} \\ Q_0(d)=0\end{subarray}} ((d,z)_0+\delta)^{-k},
\end{align*}
where $\delta_{c=e_1}$ is the \emph{Kronecker} function, i.e. the value is $1$ if and only if $c$ is equivalent to $e_1$, and $\tilde{c}$ is the onto $L_0\dual$ projected vector of $c$, i.e. the first and last component is left out. 

We apply the well-known identity $$\sum_{m \in \Z} (\tau+m)^{-k} = \frac{(-2\pi i)^k}{\Gamma(k)} \cdot \sum_{r \in \N} r^{k-1} e^{2\pi i r \tau}$$
and get
$$
{E_{k,S,c}^{*}}^{(0)}(z) = \delta_{c=e_1}- \frac{2k}{B_k}\sum_{\begin{subarray}{c}d \in L_0+ \tilde{c}\backslash\{0\} \\ Q_0(d)=0\\ d_1 \geq 0\end{subarray}} \, \, \sum_{r \in \N} r^{k-1} e^{2\pi i r (d,z)_0},
$$
if $k$ is even. This gives
\begin{theorem}
Let $k>n+2$ be even. The Fourier coefficients of the singular term are given by
\begin{align*}
a_{{E_{k,S,c}^{*}}^{(0)}}(\lambda) = \left\{ \begin{array}{ll}
                             \delta_{c=e_1}, &\lambda = 0, \\
                             -\frac{2k}{B_k}\cdot \sum\limits_{\begin{subarray}{c}d|\gcd(S_0 \lambda) \\ \frac{1}{d}\lambda \in L_0+ c\end{subarray}} d^{k-1}, & 0\neq \lambda\in L_0\dual \cap \partial\Pos_S, \\
                             0, &\text{otherwise}.
                            \end{array} \right.
\end{align*}
\end{theorem}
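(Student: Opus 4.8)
The plan is to read the Fourier coefficients straight off the expansion
\[
{E_{k,S,c}^{*}}^{(0)}(z) = \delta_{c=e_1}- \frac{2k}{B_k}\sum_{\substack{d \in L_0+ \tilde{c}\setminus\{0\} \\ Q_0(d)=0,\ d_1 \geq 0}}\sum_{r \in \N} r^{k-1}\, e^{2\pi i r (d,z)_0}
\]
obtained above. Since $k>n+2$, this double series converges absolutely, so it may be reorganised by the single frequency $\lambda:=r d$, which lies in $L_0\dual$ because $d\in L_0\dual$. For $\lambda=0$ the equality $rd=\lambda$ with $r\geq 1$ would force $d=0$, which is excluded from the inner sum; hence only the constant term contributes and $a_{{E_{k,S,c}^{*}}^{(0)}}(0)=\delta_{c=e_1}$.

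Fix now $\lambda\neq 0$. A pair $(d,r)$ contributes to the coefficient of $e^{2\pi i(\lambda,z)_0}$ exactly when $d=\lambda/r$ satisfies $d\in L_0+\tilde c\setminus\{0\}$, $Q_0(d)=0$ and $d_1\geq 0$. Using $Q_0(\lambda/r)=r^{-2}Q_0(\lambda)$ and $(\lambda/r)_1=r^{-1}\lambda_1$, these become $Q_0(\lambda)=0$, $\lambda_1\geq 0$ and $\tfrac1r\lambda\in L_0+\tilde c$. In particular a nonzero coefficient forces $\lambda\in\partial\Pos_S$; for $\lambda\in L_0\dual\cap\overline{\Pos_S}$ with $Q_0(\lambda)>0$ no term survives, so the coefficient vanishes, and likewise it vanishes for $\lambda\notin L_0\dual$. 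For $0\neq\lambda\in L_0\dual\cap\partial\Pos_S$ we are left with
\[
a_{{E_{k,S,c}^{*}}^{(0)}}(\lambda) = -\frac{2k}{B_k}\sum_{\substack{r\in\N \\ \frac1r\lambda\in L_0+\tilde c}} r^{k-1}.
\]

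It remains to put the $r$-sum into the stated shape. Because $L_0+\tilde c\subseteq L_0\dual=S_0^{-1}\Z^{n+2}$, the membership $\tfrac1r\lambda\in L_0+\tilde c$ already entails $S_0\lambda\in r\Z^{n+2}$, i.e. $r\mid\gcd(S_0\lambda)$; conversely this divisibility is precisely the condition for $\tfrac1r\lambda$ to lie in $L_0\dual$. Hence the index $r$ may be replaced by a divisor $d$ of $\gcd(S_0\lambda)$ carrying the extra coset condition $\tfrac1d\lambda\in L_0+\tilde c$, which gives
\[
a_{{E_{k,S,c}^{*}}^{(0)}}(\lambda) = -\frac{2k}{B_k}\sum_{\substack{d\mid\gcd(S_0\lambda) \\ \frac1d\lambda\in L_0+\tilde c}} d^{k-1},
\]
in agreement with the statement (where $\tilde c$ is written $c$). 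Combined with the value at $\lambda=0$ and the vanishing for all remaining $\lambda$, this proves the theorem.

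The argument is essentially bookkeeping: the only points requiring (routine) care are the interchange of the two summations, licensed by absolute convergence for $k>n+2$; the identification of $d\mid\gcd(S_0\lambda)$ with $\tfrac1d\lambda\in L_0\dual$; and the observation that the surviving frequencies all lie on $\partial\Pos_S$. I do not expect a genuine obstacle here.
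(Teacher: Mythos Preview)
Your proof is correct and follows exactly the approach of the paper, which simply states ``This gives'' after the expansion and leaves the bookkeeping implicit; you have spelled out precisely the regrouping $\lambda=rd$ and the translation of the constraint $\tfrac1r\lambda\in L_0+\tilde c$ into the divisor condition $d\mid\gcd(S_0\lambda)$. Your remark that the theorem writes $c$ where $\tilde c$ is meant is also apt.
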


The relation (\ref{eq:DiffEis}) yields
\begin{corollary}
 \label{lem:EisSing}
Let $k > n+2$. The non-zero Fourier coefficients of $\EkSingC$ occur with exponents $\lambda\in \partial \Pos_S$. The constant term reads
$$a_{\EkSingC}(0) = \begin{cases} 1, & c\in L_1 \text{ and $k$ even,}\\ 0, & \text{otherwise}.\end{cases}$$
 If $0 \neq \lambda \in \partial L_0\dual \cap \Pos_S$, then set $\varepsilon:=\gcd(S_0 \lambda)$. Clearly $h:=\frac{1}{\varepsilon}\lambda \in L_0\dual$ and there exists exactly one cusp $c^{*}$ such that $h \in L_0\dual \pm c^{*}$. Let $N:= \operatorname{ord}_{L_0\dual/L_0}(h)$. If a $t$ exists such that $th \in L_0+c$, then
 $$ a_{\EkSingC}(\lambda) = \frac{(-2\pi i)^k}{2\Gamma(k)} \cdot \sum_{d|\varepsilon} d^{k-1} \sum_{\begin{subarray}{c}0\neq\alpha \in \Z\\ \alpha \frac{\varepsilon}{d} \equiv t \mod N\end{subarray}} \mu(|\alpha|)\alpha^{-k}.$$
 Otherwise the Fourier coefficient vanishes.
\end{corollary}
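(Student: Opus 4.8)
The plan is to compute the Fourier expansion of $\EkSingC$ directly from the explicit shape of $E_{k,S,c}$ in Theorem~\ref{thm:Eisenstein_explicit}, i.e.\ to rerun the computation that produced the coefficients of ${E_{k,S,c}^{*}}^{(0)}$ above, but this time retaining the primitivity condition $\gcd(S_1 g)=1$; the M\"obius function in the statement is exactly what that condition contributes, and it encodes the same coset bookkeeping as \eqref{eq:DiffEis}. For a vector $g$ from Theorem~\ref{thm:Eisenstein_explicit} with $\gamma=0$, written through $\delta$ and $d$ as usual, one has $Q_1(g)=Q_0(d)$ and $\gcd(S_1 g)=\gcd(\delta,\gcd(S_0 d))$, while $g\in L_1+c$ is equivalent to $\delta\in\Z$ and $d\in L_0+\tilde{c}$, where $\tilde{c}\in L_0\dual$ is the projection of $c$ onto its middle $n+2$ coordinates. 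Hence
\[
2\,\EkSingC(z)=\sum_{\substack{d\in L_0+\tilde{c}\\ Q_0(d)=0}}\ \ \sum_{\substack{\delta\in\Z\\ \gcd(\delta,\,\gcd(S_0 d))=1}}\bigl((d,z)_0+\delta\bigr)^{-k},
\]
where the summand $d=0$, present precisely when $\tilde{c}\in L_0$, i.e.\ $c\in L_1$ (and then forcing $\delta=\pm1$), contributes the constant $1+(-1)^k$.

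For each fixed $d\neq0$ with $Q_0(d)=0$ I would set $q:=\gcd(S_0 d)$ and resolve the coprimality of $\delta$ by M\"obius inversion,
\[
\sum_{\gcd(\delta,q)=1}(\tau+\delta)^{-k}=\sum_{e\mid q}\mu(e)\,e^{-k}\sum_{\delta'\in\Z}\bigl(\tfrac{\tau}{e}+\delta'\bigr)^{-k},
\]
and then apply the Lipschitz formula $\sum_{m\in\Z}(\sigma+m)^{-k}=\tfrac{(-2\pi i)^k}{\Gamma(k)}\sum_{r\geq1}r^{k-1}e^{2\pi i r\sigma}$; here $\operatorname{Im}\bigl((d,z)_0\bigr)=(d,y)_0\neq0$ for $y\in\Pos_S$ because $d$ is a nonzero isotropic vector, and on the nappe where $(d,y)_0<0$ one first replaces $d$ by $-d$, at the cost of a factor $(-1)^k$. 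This exhibits every nonzero Fourier frequency of $\EkSingC$ as a positive multiple of an isotropic vector, hence lying in $\partial\Pos_S$, and shows that the constant term equals the $d=0$ contribution; together with the first paragraph this gives the first two assertions of the corollary.

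Finally I would collect the coefficient at a fixed $0\neq\lambda\in L_0\dual\cap\partial\Pos_S$. Put $\varepsilon=\gcd(S_0\lambda)$ and $h=\lambda/\varepsilon$, so $h$ is primitive. A triple $(d,e,r)$ contributes to the $\lambda$-mode iff $\pm\tfrac{r}{e}d=\lambda$; since $h$ is primitive and $d\in L_0\dual$, a short content argument forces $d=\pm mh$ with $m=\tfrac{e\varepsilon}{r}\in\Z_{>0}$, whence $\gcd(S_0 d)=m$, $e\mid m$ and $m\mid e\varepsilon$. The condition $d\in L_0+\tilde{c}$ becomes $\pm mh\equiv\tilde{c}\mod L_0$: if no $t$ with $th\equiv\tilde{c}\mod L_0$ exists it never holds and $a_{\EkSingC}(\lambda)=0$; otherwise it reads $m\equiv\pm t\mod N$ with $N=\operatorname{ord}_{L_0\dual/L_0}(h)$. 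Writing $m=ef$ with $f\mid\varepsilon$ (so $r=\varepsilon/f$) and setting $\alpha:=e$ on the nappe $(d,y)_0>0$ and $\alpha:=-e$ on the other, the factor $(-1)^k$ is absorbed into $\alpha^{-k}$, the two residues $\pm t$ merge into the single congruence $\alpha f\equiv t\mod N$, and $\mu(e)=\mu(|\alpha|)$; substituting $d=\varepsilon/f$ one obtains
\[
2\,a_{\EkSingC}(\lambda)=\frac{(-2\pi i)^k}{\Gamma(k)}\sum_{d\mid\varepsilon}d^{k-1}\sum_{\substack{0\neq\alpha\in\Z\\ \alpha\varepsilon/d\equiv t\mod N}}\mu(|\alpha|)\,\alpha^{-k},
\]
which is the assertion; all the rearrangements are legitimate by the absolute convergence valid for $k>n+2$. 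The main obstacle I anticipate is precisely this last step: keeping the two nappes of the isotropic cone apart so that the sign $(-1)^k$ and the two residues $\pm t$ collapse into the one term $\mu(|\alpha|)\alpha^{-k}$ with a single congruence, and making the content argument explicit (a rational multiple of a primitive vector of $L_0\dual$ that again lies in $L_0\dual$ is necessarily an integer multiple).
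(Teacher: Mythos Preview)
Your argument is correct. The M\"obius inversion on the $\delta$-sum, the Lipschitz expansion, the content argument (a rational multiple of a primitive $h\in L_0^\sharp$ that lands back in $L_0^\sharp$ is integral because $\gcd(S_0h)=1$), and the nappe bookkeeping all go through as you describe; the substitution $m=ef$, $d=\varepsilon/f$, $\alpha=\pm e$ produces exactly the stated formula.

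Your route, however, differs from the paper's. The paper does \emph{not} redo the computation with the primitivity constraint built in; instead it first computes the Fourier coefficients of the unconstrained series ${E_{k,S,c}^{*}}^{(0)}$ (the preceding Theorem) and then invokes the linear relation~\eqref{eq:DiffEis} expressing each $E_{k,S,c}^{*}$ as a combination $\sum_{c^{*}}\bigl(\zeta(k)^{-1}\sum_{\alpha\equiv\pm t_{c^{*}}(N_{c^{*}})}\alpha^{-k}\bigr)E_{k,S,c^{*}}$; M\"obius-inverting that system of relations is what introduces the factor $\mu(|\alpha|)$ in the corollary. In other words, the paper performs the M\"obius inversion at the level of Eisenstein series indexed by cusps, while you perform it inside a single series at the level of the $\delta$-sum. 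Your approach is the ``direct proof'' the paper explicitly defers to \cite{Schaps_Diss}; it is more self-contained and avoids the detour through the full cusp decomposition, whereas the paper's approach makes the structural role of \eqref{eq:DiffEis} transparent and reuses the already-established coefficients of ${E_{k,S,c}^{*}}^{(0)}$.
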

A direct proof can also be found in \cite{Schaps_Diss}.

\section{Calculating the Fourier Coefficients of the cusp term}
Now consider the cusp term. So let $\gamma \neq 0$.
The computations are all done for $E_{k,S,c}$. By dropping the primitivity condition on the vectors, the computations are valid for $E_{k,S,c}^{*}$, too.

Firstly, note that $\delta$ is uniquely determined by
\begin{align}\label{gl:delta}
\delta = - \frac{1}{\gamma} Q_0(d).
\end{align}
We notice that the set $\mathcal{T}:=\{ T_\lambda; \, \lambda \in \Z^{n+2} \}\subset \widetilde{\Gamma}_S$ forms a group. 

We make use of the fact
\begin{align*}
T_\lambda \cdot  \begin{pmatrix}\delta \\ d \\ \gamma\end{pmatrix} =  \begin{pmatrix}
1 & -\lambda\T S_0 & - Q_0(\lambda) \\ 0 & I & \lambda \\ 0 & 0 & 1
\end{pmatrix}\cdot  \begin{pmatrix}\delta \\ d \\ \gamma\end{pmatrix} = \begin{pmatrix}
* \\ d+\gamma \cdot \lambda \\ \gamma
\end{pmatrix}.
\end{align*}
Reordering the summands gives
\begin{align*}
\EkCuspC(z) &= \frac{1}{2} \sum_{\begin{subarray}{c} g\in L_1+c \\ Q_1(g)=0\\ \gcd(S_1 g) = 1 \\ \gamma \neq 0 \end{subarray}}  j(g,z)^{-k} \\
&=
\frac{1}{2}\sum_{\gamma \neq 0} \; \sum_{\begin{subarray}{c} g = (\delta, d,\gamma)\in L_1+c \\ Q_1(g)= 0\\ \gcd(S_1g)=1 \\ d : L_0\dual / \gamma L_0\dual \end{subarray}}\;
\sum_{\lambda \in \Z^{n+2}} j(T_\lambda g, z)^{-k}.
\end{align*}
As a consequence of the cocyle relation we get
\begin{align*}
j(T_\lambda\cdot g, z ) = j(g, T_{-\lambda}\langle z \rangle) \cdot j(T_{-\lambda}, z)
= j(g , z - \lambda)
\end{align*}
and therefore
\begin{align}\label{eq:Cusp1}
\EkCuspC(z) &= \frac{1}{2}\sum_{\gamma \neq 0}\; \sum_{\begin{subarray}{c} g = (\delta, d,\gamma)\in L_1+c \\ Q_1(g)= 0\\ \gcd(S_1g)=1 \\ d : L_0\dual / \gamma L_0\dual \end{subarray}}\;
\sum_{\lambda \in \Z^{n+2}} j(g, z+\lambda )^{-k}.
\end{align}

The next observation is that we can write $j(g,z)= -\gamma \cdot Q_0\left(z-\frac{1}{\gamma}d\right)$ if $\gamma \neq 0$.
To prove this claim we put $\lambda = -\frac{1}{\gamma} d$ and calculate
\[
 T_\lambda\cdot g = \begin{pmatrix}
1 & \frac{1}{\gamma}d\T S_0 & -\frac{Q_0(d)}{\gamma^2} \\
0 & I^{(n)} & -\frac{1}{\gamma}d \\ 0 & 0 & 1
\end{pmatrix}\cdot \begin{pmatrix} \delta \\ d\\ \gamma\end{pmatrix} = \begin{pmatrix}
0 \\ 0 \\ \gamma
\end{pmatrix}.
\]
Because of (\ref{gl:delta}) we know that $\delta = - \frac{1}{\gamma}Q_0(d)$. That is why the first component of $T_\lambda g$ vanishes.
Consequently,
\[
-\gamma \cdot Q_0(z) =j(T_\lambda g, z) = j(g, z - \lambda) =j\left( g, z + \frac{1}{\gamma}d\right).
\]
We conclude from (\ref{eq:Cusp1})
\begin{align}\label{eq:Cusp2}
 \EkCuspC (z) = \frac{1}{2} \sum_{\gamma \neq 0} \; \sum_{\begin{subarray}{c} g = (\delta, d,\gamma)\in L_1+c  \\ Q_1(g)= 0\\ \gcd(S_1g)=1 \\ d : L_0\dual / \gamma L_0\dual \end{subarray}}\;\sum_{\lambda\in \Z^{n+2}}
\gamma^{-k} \left(-Q_0\left(z-\frac{1}{\gamma}d+\lambda \right)\right)^{-k}.
\end{align}

% For the Eisenstein series attached to our standard cusp $e_1$ we can assume that $k$ is even. 
% The contribution of the summands where $\gamma >0$ is the same as where $\gamma <0$. This leads to
% \begin{align}\label{eq:Cusp3}
%  \EkCusp (z) = \sum_{\gamma = 1}^\infty \; \sum_{\begin{subarray}{c} g = (\delta, d,\gamma) \in L_1 \\ S_1[g]= 0\\ \gcd(S_1g)=1 \\ d : \Z^{n+2} / \gamma\Z^{n+2} \end{subarray}}\;\sum_{\lambda\in \Z^{n+2}}
% \gamma^{-k} \left(-Q_0\left(z-\frac{1}{\gamma}d+\lambda \right)\right)^{-k}.
% \end{align}

The inner series is periodic. Due to \cite{Ibukiyama} an analogue of \cite[Lemma V.1.7]{K3} is
\begin{lemma}\label{lem:Fourier_quadratic_sum}
 Given $k>n+1$ one has
 \[
  \sum_{\lambda \in \Z^{n+2}} \left(-Q_0(z+\lambda)\right)^{-k} = \ConstCusp \cdot \sum_{\begin{subarray}{c}\lambda \in L_0\dual \cap \Pos_S\end{subarray}}
 Q_0(\lambda)^{k-\frac{n+2}{2}} e^{2\pi i (\lambda,z)_0},
 \]
 where $\ConstCusp := \frac{(2\pi)^{2k-\frac{n}{2}}}{\sqrt{\det(S)}\cdot \Gamma(k)\cdot \Gamma(k-\frac{n}{2})}$.
\end{lemma}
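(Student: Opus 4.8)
The plan is to compute the Fourier expansion of the left-hand side directly; the only input taken from the literature is the evaluation of a Laplace-type integral over the forward light cone $\Pos_S$ --- the ``gamma integral'' for the Lorentzian cone, which is the content of \cite[Lemma~V.1.7]{K3} and \cite{Ibukiyama} and plays here the role that $\int_0^\infty t^{k-1}e^{-tw}\,\D t=\Gamma(k)w^{-k}$ (for $\Re w>0$) plays in the classical identity $\sum_{m\in\Z}(\tau+m)^{-k}=\frac{(-2\pi i)^k}{\Gamma(k)}\sum_{r\in\N}r^{k-1}e^{2\pi i r\tau}$ used above.

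First I would note that for $z\in\h_S$ one has $-Q_0(z)\notin(-\infty,0]$ (this follows from the reversed Cauchy--Schwarz inequality for the signature-$(1,n+1)$ form on $L_0$ together with $y\in\Pos_S$), so $(-Q_0(z))^{-k}$ is well defined by the principal branch, and that $F(z):=\sum_{\lambda\in\Z^{n+2}}(-Q_0(z+\lambda))^{-k}$ converges absolutely for $k>n+1$. Since the summation runs over all of $\Z^{n+2}=L_0$, $F$ is invariant under $z\mapsto z+\lambda$ for $\lambda\in\Z^{n+2}$, hence has a Fourier expansion in the exponentials $e^{2\pi i(\lambda,z)_0}$ with $\lambda$ ranging over $L_0\dual=S_0^{-1}\Z^{n+2}$. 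Unfolding the integral over a fundamental parallelotope against $\Z^{n+2}$, the coefficient at $\lambda$ equals $e^{2\pi(\lambda,y)_0}\int_{\R^{n+2}}(-Q_0(x+iy))^{-k}e^{-2\pi i(\lambda,x)_0}\,\D x$, so everything reduces to this integral over $\R^{n+2}$.

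Next I would bring it into the shape of the light-cone gamma integral
\[
\int_{\Pos_S}Q_0(u)^{k-\frac{n+2}{2}}e^{-2\pi(u,w)_0}\,\D u=C\cdot Q_0(w)^{-k},\qquad C:=\frac{\Gamma(k)\,\Gamma(k-\tfrac n2)\sqrt{\det S}}{(2\pi)^{2k-\frac n2}},
\]
valid for $\Re w\in\Pos_S$; with $w=-iz$ one has $Q_0(w)=-Q_0(z)$, $\Re w=y\in\Pos_S$ and $(u,-iz)_0=-i(u,z)_0$, so this rewrites $(-Q_0(z))^{-k}$ as a ``continuous Fourier series'' over $\Pos_S$. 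One now finishes in either of two equivalent ways: substitute this representation into $F(z)=\sum_\lambda(-Q_0(z+\lambda))^{-k}$ and apply Poisson summation to $\sum_{\lambda\in\Z^{n+2}}e^{2\pi i(u,\lambda)_0}$, which is the Dirac comb $\tfrac1{|\det S_0|}\sum_{\mu\in L_0\dual}\delta(u-\mu)$ and collapses the cone integral onto $L_0\dual\cap\Pos_S$; or read off the value of the integral from the previous paragraph directly, via Fourier inversion with respect to the pairing $(\cdot,\cdot)_0$, whose Jacobian is $|\det S_0|=\det S$. Since $|\det S_0|=\det S$ and $\tfrac1{C\det S}=\ConstCusp$, either route gives the claimed identity, the terms with $\lambda=0$ or $\lambda\in\partial\Pos_S$ contributing nothing because the cone integral is over the open cone.

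The real content is the light-cone gamma integral itself. Were one to prove it rather than cite it, the standard route is to write $S=T\T T$ and perform a linear substitution (this is what contributes the factor $\sqrt{\det S}$) bringing $Q_0$ to the standard Lorentzian normal form $\tfrac12(s^2-|\xi|^2)$ on $\R^{n+2}$, then pass to light-cone coordinates and evaluate by an iterated one-dimensional computation, or invoke the Gindikin gamma function of the symmetric cone $\Pos_S$ --- this is exactly \cite[Lemma~V.1.7]{K3}/\cite{Ibukiyama}. The remaining point needing care is that the interchange of $\sum_{\Z^{n+2}}$ with the cone integral (equivalently, the Poisson summation step) is only conditionally valid and must be justified by an Abel-summation / convergence-factor argument exploiting $\Im z\in\Pos_S$.
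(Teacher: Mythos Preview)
The paper does not actually give a proof of this lemma; it simply attributes the result to \cite{Ibukiyama} (as an analogue of \cite[Lemma~V.1.7]{K3}) and states it. Your sketch is a correct fleshing-out of precisely that argument---Fourier-expand the periodic sum, reduce to the light-cone gamma integral of those references, and read off the coefficient via Poisson summation / Fourier inversion---so your approach and the paper's (implicit) one coincide.
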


This gives the Fourier expansion.
\begin{theorem}\label{thm:Fourier1}
Let $k>n+2$ be even. The cusp term of the Eisenstein series has the Fourier expansion
\[
\EkCuspC(z) = \sum_{\begin{subarray}{c}\lambda \in L_0\dual \cap \Pos_{S}\end{subarray}} a_{\EkCusp}(\lambda) \cdot e^{2\pi i (\lambda, z)_0}
\]
with the Fourier coefficients
\[
a_{\EkCuspC}(\lambda) = \frac{\ConstCusp}{2} \cdot Q_0(\lambda)^{k-\frac{n+2}{2}} \sum_{\gamma \neq 0} \gamma^{-k}
\; \sum_{\begin{subarray}{c} g = (\delta, d,\gamma) \in L_1- c \\ Q_1(g)= 0\\ \gcd(S_1g)=1 \\ d : L_0\dual / \gamma L_0 \end{subarray}}  \exp\left(\frac{2\pi i}{\gamma} (\lambda,d)_0 \right).
\]
\end{theorem}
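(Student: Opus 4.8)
The plan is to feed the inner periodic lattice sum of (\ref{eq:Cusp2}) into Lemma~\ref{lem:Fourier_quadratic_sum} and then reorganise the resulting series. For fixed $\gamma\neq0$ and a fixed admissible $g=(\delta,d,\gamma)$ the point $z-\tfrac1\gamma d$ again lies in $\h_S$ (as $d$ is real), and since $k>n+2>n+1$ the lemma applies with $z$ replaced by $z-\tfrac1\gamma d$, giving
\begin{align*}
\sum_{\lambda\in\Z^{n+2}}\bigl(-Q_0\bigl(z-\tfrac1\gamma d+\lambda\bigr)\bigr)^{-k}
&=\ConstCusp\sum_{\mu\in L_0\dual\cap\Pos_S}Q_0(\mu)^{k-\frac{n+2}{2}}\,e^{2\pi i(\mu,z)_0}\,e^{-\frac{2\pi i}{\gamma}(\mu,d)_0}.
\end{align*}
Substituting this into (\ref{eq:Cusp2}) and interchanging the summations over $\gamma$, over the residue classes of $d$, and over $\mu$ displays a Fourier expansion of $\EkCuspC$ supported on $L_0\dual\cap\Pos_S$ with coefficients of the stated shape. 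In particular the lemma produces only exponents in the open cone $\Pos_S$, so the cusp term has neither a constant term nor boundary exponents --- consistent with Corollary~\ref{lem:EisSing}, which assigns all of those to $\EkSingC=E_{k,S,c}-\EkCuspC$ --- and this at the same time confirms that what we obtain is genuinely the Fourier expansion of the holomorphic, $\Z^{n+2}$-periodic function $\EkCuspC$.

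It then remains to bring the coefficient into the exact form of the statement, which is pure bookkeeping. First, the character $d\mapsto e^{-\frac{2\pi i}{\gamma}(\mu,d)_0}$ depends only on $d$ modulo $\gamma L_0$, because $(\mu,\ell)_0\in\Z$ for $\ell\in L_0$; it is not well defined merely modulo $\gamma L_0\dual$, so the $d$-sum is correctly indexed by $d\in L_0\dual/\gamma L_0$, as in the statement. Second, the replacement of $L_1+c$ by $L_1-c$ appearing in the statement is effected by the substitution $g\mapsto-g$, which preserves $Q_1(g)=0$ and $\gcd(S_1g)=1$, interchanges $L_1+c$ with $L_1-c$, and --- since $k$ is even, so $(-1)^k=1$ --- leaves $\gamma^{-k}$ unchanged; tracking the signs of $\gamma$ and $d$ through it produces the character as written. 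Collecting the constants gives the asserted formula for $a_{\EkCuspC}(\lambda)$ with $\lambda=\mu$. (The same computation with the primitivity condition $\gcd(S_1g)=1$ dropped and a factor $1/\zeta(k)$ restored yields the analogous expansion of $E_{k,S,c}^{*}$.)

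The one genuinely delicate point is the first rearrangement: one must verify that the iterated series over $\gamma$, over the $d$-classes and over $\mu$ converges absolutely, so that the interchanges of summation and pulling the lemma's Fourier expansion past the outer sums are legitimate. This is guaranteed by the absolute convergence of the defining Eisenstein series for $k>n+2$ quoted from \cite{Schaps_Diss}, together with the absolutely convergent nature of the manipulation underlying Lemma~\ref{lem:Fourier_quadratic_sum}. Granting that, everything else is the routine index-and-sign chase indicated above, with the hypothesis ``$k$ even'' used only in the passage between $L_1\pm c$.
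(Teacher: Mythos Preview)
Your proposal is correct and follows essentially the same route as the paper: the paper's argument for Theorem~\ref{thm:Fourier1} is in fact just the sentence ``This gives the Fourier expansion'' after stating Lemma~\ref{lem:Fourier_quadratic_sum}, i.e.\ substitute the lemma into (\ref{eq:Cusp2}) and read off the coefficients. You supply more of the bookkeeping than the paper does --- the passage from $L_1+c$ to $L_1-c$ via $g\mapsto -g$ using $k$ even, the correct coset range $L_0\dual/\gamma L_0$ for $d$, and the absolute-convergence justification for the rearrangement --- none of which the paper spells out.
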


From here on, let $c=e_1$ be the standard cusp.
We interpret the series appearing in the Fourier coefficients as the value of the following \emph{Dirichlet series} at $s=k$:
\[
 L(s,a(\lambda,\cdot)) = \sum_{\gamma=1}^\infty a(\lambda, \gamma) \gamma^{-s},
\]
where the Dirichlet coefficient equals
\begin{align}\label{eq:DirichletCusp}
 a(\lambda, \gamma):=  \sum_{\begin{subarray}{c} g = (\delta, d,\gamma) \in L_1 \\ Q_1(g)= 0\\ \gcd(S_1g)=1 \\ d : L_0\dual / \gamma L_0 \end{subarray}}  
 \exp\left(\frac{2\pi i}{\gamma} (\lambda,d)_0 \right)
\end{align}

Before we explicitly compute $a(\lambda,\gamma)$, we need some preparations.
\begin{lemma}\label{lem:Dirichlet_coeff_prop}
\begin{enumerate}[(i)]
\item The Dirichlet coefficients are a \emph{multiplicative arithmetic function}, i.e. $a(\lambda,\gamma\gamma')=a(\lambda,\gamma)\cdot a(\lambda,\gamma')$ for coprime pairs $(\gamma,\gamma')$.
\item For all $\gamma \in \N$ and $K \in {O}(S_0;\Z)$ it holds $a(K \lambda, \gamma)=a(\lambda,\gamma)$.
\item Let $p\in \P$. Then there exists a $K \in O^{+}(S_0;\Z)$ such that
\[
K \lambda = p^{\nu_p(\gcd(S_0\lambda))}\begin{pmatrix}
l^{*} \\ \mu^{*} \\ m^{*}
\end{pmatrix},
\]
where $\nu_p(\gcd)$ is the $p$-adic valuation and $p \nmid m^{*}$.
\item Let $\beta | m$ and $\gcd(\beta,\gamma)=1$.
\[
 a\left(\begin{pmatrix}
         l \\ \mu \\ m
        \end{pmatrix},\gamma \right) = a\left(\begin{pmatrix}
                        \beta l \\ \mu \\ \frac{m}{\beta} 
                       \end{pmatrix},\gamma\right) \text{ and } a(\lambda,\gamma) = a\left(\frac{1}{\beta}\lambda,\gamma\right).
\]
\end{enumerate}
\end{lemma}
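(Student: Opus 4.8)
Throughout one exploits the shape of $S_1=\left(\begin{smallmatrix}0&0&1\\0&S_0&0\\1&0&0\end{smallmatrix}\right)$: for $g=(\delta,d,\gamma)$ the condition $Q_1(g)=0$ reads $\delta\gamma+Q_0(d)=0$, so $\delta=-Q_0(d)/\gamma$ is determined by $d$ and $\gamma\mid Q_0(d)$ is forced, while $\gcd(S_1g)=\gcd(\gamma,S_0d,\delta)$. Hence $a(\lambda,\gamma)=\sum_d\exp\!\big(\tfrac{2\pi i}{\gamma}(\lambda,d)_0\big)$ is an exponential sum over the residues $d$ modulo $\gamma L_0$ satisfying $\gamma\mid Q_0(d)$ and $\gcd(\gamma,S_0d,Q_0(d)/\gamma)=1$ --- conditions that do not involve $\lambda$; only the phase does. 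Parts (i), (ii) and (iv) will therefore each be proved by exhibiting a bijection of this residue set which transforms the phase as required, the verification that the two conditions above survive being routine and using the stated coprimality hypotheses.

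Concretely: for (i) I would use the Chinese Remainder Theorem to identify $L_0/\gamma\gamma'L_0\cong L_0/\gamma L_0\times L_0/\gamma'L_0$; since $Q_0$ of a CRT--lift of $(d_1,d_2)$ is $\equiv Q_0(d_1)\bmod\gamma$ and $\equiv Q_0(d_2)\bmod\gamma'$, the conditions split as a product, and the partial--fraction congruence $\tfrac1{\gamma\gamma'}\equiv\tfrac{\overline{\gamma'}}{\gamma}+\tfrac{\overline{\gamma}}{\gamma'}\pmod 1$ (with $\overline{\gamma'}\gamma'\equiv1\bmod\gamma$, $\overline{\gamma}\gamma\equiv1\bmod\gamma'$), followed by the changes of variable $d_1\mapsto\gamma'd_1$, $d_2\mapsto\gamma d_2$, factors the phase and yields $a(\lambda,\gamma\gamma')=a(\lambda,\gamma)\,a(\lambda,\gamma')$. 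For (ii), any $K\in O(S_0;\Z)$ permutes the residue set via $d\mapsto Kd$ --- it preserves $Q_0$ and carries $S_0d$ by the unimodular matrix $(K^{tr})^{-1}$, hence preserves $\gcd(S_0d)$ and $\gcd(\gamma,S_0d,\dots)$ --- and $(K\lambda,Kd)_0=(\lambda,d)_0$, so $a(K\lambda,\gamma)=a(\lambda,\gamma)$. For (iv), write $d=(d_1,d',d_{n+2})$ in the orthogonal splitting of $S_0$ into a hyperbolic plane and $-S$, so that $(\lambda,d)_0=ld_{n+2}-\mu^{tr}Sd'+md_1$: the first identity comes from the substitution $d\mapsto\diag(\beta,I,\overline{\beta})\,d$ with $\overline{\beta}\beta\equiv1\bmod\gamma$ --- a bijection of the residue set because $\diag(\beta,I,\overline{\beta})^{tr}S_0\,\diag(\beta,I,\overline{\beta})\equiv S_0\pmod\gamma$ --- which carries the phase of $(\beta l,\mu,m/\beta)$ to that of $(l,\mu,m)$ (here $\beta\mid m$ is only used so that $(\beta l,\mu,m/\beta)\in L_0^\#$); the second identity $a(\lambda,\gamma)=a(\tfrac1\beta\lambda,\gamma)$ is even easier, via $d\mapsto\beta d$, which rescales $Q_0$ by the unit $\beta^2$ and $S_0d$ by the unit $\beta$ and turns $\tfrac1{\beta\gamma}(\lambda,d)_0$ into $\tfrac1\gamma(\lambda,d)_0$.

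Part (iii) is the one I expect to require real work, since it is a statement about $O^+(S_0;\Z)$ rather than about the exponential sum. First divide out: $\nu:=\nu_p(\gcd(S_0\lambda))$ forces $S_0\lambda\in p^\nu L_0$, hence $\lambda=p^\nu\lambda'$ with $\lambda'\in L_0^\#$ (as $S_0$ maps $L_0^\#$ onto $L_0$) and $p\nmid\gcd(S_0\lambda')$; it then suffices to find $K\in O^+(S_0;\Z)$ with $p\nmid(K\lambda')_{n+2}$, for then $K\lambda=p^\nu(K\lambda')$ has the asserted shape. I would produce such a $K$ by an explicit sequence of moves under generators of $O^+(S_0;\Z)$ --- the translations $T_v,\widetilde{T}_v$ and rotations $R_U$, the $L_0$--analogues of \eqref{gl_4}--\eqref{gl_5}, together with the hyperbolic reflection $\left(\begin{smallmatrix}0&0&1\\0&-I&0\\1&0&0\end{smallmatrix}\right)$ --- distinguishing cases by which entry of $S_0\lambda'=(m',-S\mu',l')$ is a $p$--unit (one of them is, since $p\nmid\gcd(S_0\lambda')$): a suitable $\widetilde{T}_v$ adds to $m'$ a prescribed linear form in $\mu'$ together with a multiple of $l'$ and settles the generic case $p\nmid l'$ or $p\nmid S\mu'$, while the hyperbolic reflection moves the entry $m'$ into the last slot when $l'$ is a $p$--unit. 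The delicate points --- and the main obstacle --- are to know that these elements generate $O^+(S_0;\Z)$ in the present setting (where $L_0$ splits only one hyperbolic plane), to arrange (by composing with a unit of $O(S;\Z)$ of suitable determinant, or with $-I$) that the reflection used lies in the identity component, and to handle the degenerate cases $p\mid\det S$ in which $S\bmod p$ does not detect $p$--units in the naive way; all of this is available from the lattice--theoretic results of \cite{KS22} and \cite{Schaps_Diss}. With (i)--(iv) in hand, the evaluation of $a(\lambda,\gamma)$ reduces to prime powers $\gamma=p^r$ and to $\lambda$ in the normal form of (iii).
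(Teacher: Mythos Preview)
Your arguments for parts (i), (ii) and (iv) are correct and are exactly what the paper does: the CRT splitting for (i), the substitution $d\mapsto Kd$ for (ii), and the diagonal rescalings $d\mapsto(\beta^{-1}d_1,\mathfrak d,\beta d_2)$ and $d\mapsto\beta d$ (equivalently $g\mapsto(\beta^2\delta,\beta d,\gamma)$) for the two identities in (iv).

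For part (iii) you are making it much harder than it is. No generation theorem for $O^+(S_0;\Z)$ is needed, nor any case distinction according to $p\mid\det S$: one simply writes down a single explicit element. After dividing out $p^{\nu_p(\gcd(S_0\lambda))}$, assume $p\nmid\gcd(S_0\lambda)=\gcd(l,S\mu,m)$. If $p\nmid m$ there is nothing to do, and if $p\nmid l$ the swap $V=\left(\begin{smallmatrix}0&0&1\\0&I&0\\1&0&0\end{smallmatrix}\right)$ suffices. In the remaining case $p\mid l$ and $p\mid m$, the hypothesis forces $p\nmid S\mu$ \emph{as an integer vector}, so some coordinate $(S\mu)_j$ is a $p$--unit; taking $u=e_j$ gives $p\nmid(u,\mu)=u^{tr}S\mu$, and then
\[
K_u\,V\lambda=\begin{pmatrix}1&u^{tr}S&Q(u)\\0&I&u\\0&0&1\end{pmatrix}\begin{pmatrix}m\\\mu\\l\end{pmatrix}
=\begin{pmatrix}m+(u,\mu)+Q(u)l\\ \mu+lu\\ l\end{pmatrix}
\]
has first entry $\not\equiv0\bmod p$; a final application of $V$ moves it to the last slot. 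Thus $K=VK_uV$ works. Your worry about $p\mid\det S$ is a red herring: the argument only uses that the integer vector $S\mu$ has a coordinate coprime to $p$, not that $S$ is invertible modulo $p$. As for $O^+$: $K_u$ is unipotent, hence in the identity component, and while $V$ itself has determinant $-1$, the product $VK_uV$ has determinant $1$ and preserves the cone $\Pos_S$ (since $V$ does), so $VK_uV\in O^+(S_0;\Z)$.
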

\begin{proof}

\begin{enumerate}[(i)]
\item Let $(\gamma,\gamma')$ be a pair of coprime numbers.
\begin{align*}
a(\lambda,\gamma)\cdot a(\lambda,\gamma') &=\sum_{\begin{subarray}{c} g = (*, u,\gamma) \in L_1 \\ Q_1(g)= 0\\ \gcd(S_1g)=1 \\ u : L_0 / \gamma L_0 \end{subarray}}
                                           \, \,
                                          \sum_{\begin{subarray}{c} g' = (*, v,\gamma') \in L_1 \\ Q_1(g')= 0\\ \gcd(S_1g')=1 \\ v : L_0 / \gamma' L_0 \end{subarray}}
 e_{\gamma\gamma'}((\lambda,\gamma' u+\gamma v)_0).
\end{align*}
Since $(\gamma,\gamma')$ are coprime, the map
\[
\Z^{n+2}/\gamma\Z^{n+2} \times \Z^{n+2}/\gamma'\Z^{n+2} \to \Z^{n+2}/(\gamma\gamma')\Z^{n+2}, (u,v)\mapsto \gamma'u+\gamma v
\]
is an isomorphism.
Furthermore,
\begin{align*}
Q_0(\gamma' u+\gamma v)= \gamma'^2 Q_0(u)+ \gamma\gamma' (u,v) + \gamma^2 Q_0(v).
\end{align*}
If $\gamma\gamma' | Q_0(\gamma' u+\gamma v)$, then it follows $\gamma|Q_0(u)$ and $\gamma'|Q_0(v)$, since $\gcd(\gamma,\gamma')=1$.
On the other hand, if $\gamma|Q_0(u)$ and $\gamma'|Q_0(v)$, we clearly have $\gamma\gamma'|Q_0(\gamma'u+\gamma v)$.

Hence, there exists a unique $\delta^{*}\in \Z$ such that
\begin{align*}
 g^{*} = \begin{pmatrix}
 \delta^{*}  \\ \gamma' u + \gamma v \\ \gamma \gamma'
 \end{pmatrix}
\end{align*}
admits $Q_1(g^{*})=0$.

Since $\gcd(S_1 g)=1=\gcd(S_1g')$ and $(\gamma,\gamma')$ is a coprime pair, by construction the property $\gcd(S_1 g^{*})=1$ is fulfilled.
Now the multiplicity follows.

\item Let $K \in O(S_0;\Z)$. Consider
\[
a(K\lambda,\gamma) = \sum_{\begin{subarray}{c} g = (\delta, d,\gamma) \in L_1 \\ Q_1(g)= 0\\ \gcd(S_1g)=1 \\ d : \Z^{n+2} / \gamma\Z^{n+2} \end{subarray}}  e_{\gamma}((K\lambda, d)_0).
\]
By replacing $g$ by $g':=R_K g$ a permutation of the summands is performed because $R_K\in O(S_1;\Z)\subset \GL_{n+4}(\Z)$ and thus $Q_1(g) = Q_1(R_K g)$.

\item Without loss of generality we may assume that $\nu_p(\gcd(S_0 \lambda))=0$.
Let $\lambda=\begin{pmatrix}
l \\ \mu \\ m
\end{pmatrix}$.
Suppose that $p|l$ and $p|m$. Consider
\[
V K_u V \lambda = V \begin{pmatrix}
                   1 & u\T S & Q(u) \\ 0 & I & u \\ 0 & 0 & 1
                  \end{pmatrix}\cdot \begin{pmatrix}
m \\ \mu \\ l
\end{pmatrix} = \begin{pmatrix}
m+(u,\mu)+Q(u)l \\ \mu+lu \\ l
\end{pmatrix} V.
\]
One has $p|m$ and $p|Q(u)l$. However we have $p\nmid S\mu$ since $\nu_p(\gcd(S_0 \lambda)=0$. Thus we find a $u\in \Z^n$ such that $p\nmid u\T S\mu=(u,\mu)$ and it follows that $p\nmid (K_u V\lambda)_1$.
The multiplication with $V\in O(S_0;\Z)$ from the left interchanges the first and last component. This is the assertion.
\item To keep the index of the sums readable, we assume the following decomposition of the vectors
\[
 g = \begin{pmatrix} \delta \\ d \\ \gamma\end{pmatrix}\in \Z^{n+4}, \, \,  d = \begin{pmatrix} d_1 \\ \mathfrak{d} \\ d_2 \end{pmatrix}\in \Z^{n+2} \text{ and }\lambda^{*} = \begin{pmatrix}
                        \beta l \\ \mu \\ \frac{m}{\beta} 
                       \end{pmatrix}.
\]
The vector $\lambda$ is decomposed as before.

\begin{align*}
 a\left(\lambda^{*},\gamma\right) &= \sum_{\begin{subarray}{c} Q_1(g)= 0\\ \gcd(S_1g)=1 \\ d : \Z^{n+2} / \gamma\Z^{n+2} \end{subarray}} e_{\gamma}\left( d_1 \cdot \frac{m}{\beta} - (\mathfrak{d},\mu) + d_2 \beta l\right). 
\end{align*}
Now we claim that replacing $g$ by $g'$ performs a permutation of the summands, where we put
\[
 g' = \begin{pmatrix} \delta^{*} \\ \beta^{-1} d_1 \\ \mathfrak{d} \\ \beta d_2 \\ \gamma\end{pmatrix}
\]
and $\delta^{*}$ is uniquely determined. Here $\cdot^{-1}$ denotes a representative of an inverse in $\Z/\gamma\Z$.

We know that $\delta = -\frac{1}{\gamma}(d_1 d_2 - Q(\mathfrak{d}))$. There exists an $r\in \Z$ such that $\beta \beta^{-1} = 1 + \gamma r$ as $\beta^{-1}\in \Z$ is a representative of the inverse of $\beta$.
We have 
\begin{align*}
\delta^{*} &= -\frac{1}{\gamma} ( d_1 d_2 \beta \beta^{-1} - Q(\mathfrak{d}))= -\frac{1}{\gamma} (d_1 d_2 (1 + r \gamma ) - Q(\mathfrak{d}))= \delta - d_1d_2 r.
\end{align*}
We conclude $\gcd(S_1 g') = \gcd(\gamma, \beta^{-1} d_1, S\mathfrak{d}, \beta d_2, \delta^{*})=\gcd(S_1 g)$.
Since both $d_1+ \gamma\Z \mapsto \beta^{-1} d_1 + \gamma\Z$ and $d_2+ \gamma\Z \mapsto \beta d_2 + \gamma\Z$ are isomorphisms our claim is true.

For the second part of our assertion we use the same idea.
\begin{align*}
 a\left(\frac{1}{\beta}\lambda,\gamma\right) &= \sum_{\begin{subarray}{c} Q_1(g)= 0\\ \gcd(S_1g)=1 \\ d : \Z^{n+2} / \gamma\Z^{n+2} \end{subarray}} e_{\gamma}\left(\left(d,\frac{1}{\beta}\lambda\right)_0\right).
\end{align*}
Here we replace $g$ by
\[
 g'= \begin{pmatrix} \beta^2 \cdot \delta \\ \beta d \\ \gamma\end{pmatrix}.
\]
Because of $\gcd(\beta,\gamma)=1$ a reordering of the summands is performed and we have
\begin{align*}
 a\left(\frac{1}{\beta}\lambda,\gamma\right) &= \sum_{\begin{subarray}{c} Q_1(g)= 0\\ \gcd(S_1g)=1 \\ d : \Z^{n+2} / \gamma\Z^{n+2} \end{subarray}} e_{\gamma}\left(\left(\beta d,\frac{1}{\beta}\lambda\right)_0\right)
 = \sum_{\begin{subarray}{c} Q_1(g)= 0\\ \gcd(S_1g)=1 \\ d : \Z^{n+2} / \gamma\Z^{n+2} \end{subarray}} e_{\gamma}\left(\left(d,\lambda\right)_0\right) = a(\lambda,\gamma).
\end{align*}
\end{enumerate}
\end{proof}

The arithmetic multiplicity of the Dirichlet coefficients implies an \emph{Euler product expansion}
\[
L(s,a(\lambda,\cdot)) =\prod_{p \in \P} \sum_{\nu = 0}^\infty a(\lambda,p^{\nu}) p^{-\nu s}.
\]

Let $\alpha,\beta,\gamma \in \N$. The \emph{Kloosterman sum} is given by
\[
K(\alpha, \beta; \gamma ) := \sum_{\begin{subarray}{c}j=1 \\ \gcd(j,\gamma)=1\end{subarray}}^\gamma e_\gamma(\alpha j + \beta j^{-1}).
\]

The convoluted Dirichlet coefficient can be expressed in terms of Kloosterman sums.
\begin{proposition}\label{prop:conv_form}
We define the arithmetic function $b(\lambda,\cdot)$ as the Dirichlet convolution
\[
b(\lambda,\cdot) = 1*a(\lambda,\cdot).
\]
Let the localization $L_p$ be maximal and let $\lambda \in L_0\dual \cap \Pos_S$. Put $\nu_\lambda:=\nu_p(\gcd(S_0 \lambda))$. Then one has
\[
b(\lambda,p^\nu) = \sum_{t=0}^{\min(\nu,\nu_\lambda)} \sum_{\begin{subarray}{c} \mathfrak{l}\in \Z^n/p^\nu\Z^n \\ Q(\mathfrak{l}) \equiv 0 \mod p^{t}
                                                \end{subarray} } \left\{
 e_{p^{\nu}}\left(-\left(\mathfrak{l},\mu \right)\right) \cdot K\left(l,\frac{m Q(\mathfrak{l})}{p^{2t}};p^{\nu-t}\right)\right\}.
\]
\end{proposition}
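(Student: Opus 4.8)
The plan is to unfold the definition of $b(\lambda,p^\nu)=\sum_{\mu=0}^{\nu}a(\lambda,p^\mu)$ into a single sum over vectors $g=(\delta,d,\gamma)\in L_1$ with $\gamma=p^\mu$, $Q_1(g)=0$, and then re-parametrize so that the divisibility condition $\gcd(S_1g)=1$ is replaced by letting $d$ range freely modulo $p^\nu$ and $\gamma=p^\nu$, at the cost of introducing the exponent $t$ that measures the common $p$-power. Concretely, I would first use Lemma~\ref{lem:Dirichlet_coeff_prop}(ii)--(iii) to reduce to the case $\lambda=(l,\mu,m)$ with $p\nmid m$ and $\nu_\lambda=\nu_p(\gcd(S_0\lambda))$ the valuation of $(l,\mu)$; this is exactly why $\nu_\lambda$ appears as the cap in the outer sum. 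Writing $d=(d_1,\mathfrak d,d_2)\in\Z\times\Z^n\times\Z$, the condition $Q_1(g)=0$ reads $\delta p^\nu=-(d_1d_2-Q(\mathfrak d))$, so $p^\nu\mid d_1d_2-Q(\mathfrak d)$, and $(\lambda,d)_0=d_1m-(\mathfrak d,\mu)+d_2l$. The primitivity $\gcd(S_1g)=1$ becomes $\gcd(p^\nu,d_1,S\mathfrak d,d_2)=1$; summing over $\mu\le\nu$ and collecting vectors by the largest power $p^t$ dividing all of $d_1,\mathfrak d,d_2$ (with $d$ now ranging over all of $\Z^{n+2}/p^\nu\Z^{n+2}$ and $\gamma=p^\nu$ fixed) is the standard "Hecke-type" unfolding that turns $1*a$ into a clean sum; the bound $t\le\min(\nu,\nu_\lambda)$ drops out because if $t>\nu_\lambda$ then $p^{t}\mid(\mathfrak d,\mu)$ forces, together with $p\nmid m$ and $p^{t}\mid d_1 d_2$, a contradiction with primitivity after dividing through.

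Next, for fixed $t$ I would substitute $d_1=p^t d_1'$, $\mathfrak d=p^t\mathfrak l$ (renaming), $d_2=p^t d_2'$ where now at least one of $d_1',d_2',\mathfrak l$ is a unit mod $p^{\nu-t}$, but crucially, after the change of variables the primitivity of the original $g$ together with $Q_1(g)=0$ forces $p\nmid d_2'$ (or symmetrically $d_1'$) — this is where maximality of $L_p$ enters via Proposition~\ref{prop:local_appl}: the local maximality guarantees that the "degenerate" configurations where both $d_1'$ and $d_2'$ are divisible by $p$ cannot satisfy $Q_1=0$ with a primitive vector, because that would produce an isotropic vector in $L_0^\sharp$ of order divisible by $p$ with integral $Q_0$. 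Assuming $p\nmid d_2'$, the relation $p^{\nu}\mid p^{2t}(d_1'd_2'-Q(\mathfrak l))$ lets me solve $d_1'\equiv d_2'^{-1}Q(\mathfrak l)\pmod{p^{\nu-2t}}$ up to a free part, and summing the character $e_{p^\nu}\!\big(p^t(d_1'm - (\mathfrak l,\mu)+d_2'l)\big)$ over the remaining variables produces exactly the Kloosterman sum $K\!\big(l,\tfrac{mQ(\mathfrak l)}{p^{2t}};p^{\nu-t}\big)$, with the leftover phase $e_{p^\nu}(-(\mathfrak l,\mu))$ — after accounting that $\mathfrak l$ lives modulo $p^{\nu-t}$ but can be lifted to $\Z^n/p^\nu\Z^n$ with the stated congruence $Q(\mathfrak l)\equiv 0\bmod p^{t}$ absorbing the overcounting. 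I would organize this as: (a) unfold $b$; (b) stratify by $t$; (c) change variables; (d) isolate the $d_1'$-sum as a Kloosterman sum; (e) reindex $\mathfrak l$.

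I expect the main obstacle to be step (c)--(d): correctly tracking how the modulus drops from $p^\nu$ to $p^{\nu-t}$ in the Kloosterman sum while the vector $\mathfrak l$ is recorded modulo $p^\nu$ with the side condition $p^t\mid Q(\mathfrak l)$, and verifying that the number of lifts matches so that no spurious factor of a power of $p$ survives. Equally delicate is the use of maximality to exclude the doubly-degenerate stratum: one must check that Proposition~\ref{prop:local_appl} really does rule out a primitive isotropic $g$ with $p\mid d_1$ and $p\mid d_2$ and $p^t\mid\mathfrak d$ for $t$ near $\nu$, since otherwise an extra term (not expressible via a single Kloosterman sum) would appear — this is presumably the precise point at which the hypothesis "$L_p$ maximal" is indispensable, and I would prove it by contraposition: such a $g$ would give $h=g/p\in L_1^\sharp$ with $Q_1(h)\in\Z$ and $p\mid\operatorname{ord}(h)$, contradicting maximality via Propositions~\ref{prop:local}--\ref{prop:local_appl} applied to $L_1$ (or its relevant sublattice $L_0$). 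The remaining manipulations — the CRT-type bijections, the elementary divisor-sum identities — are routine and parallel the computations already carried out in Lemma~\ref{lem:Dirichlet_coeff_prop}.
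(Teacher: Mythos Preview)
Your overall architecture (unfold $1*a$, stratify, extract a Kloosterman sum) is right, but two load-bearing identifications are off.

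\textbf{The stratification variable.} You propose to let $t$ be the largest power of $p$ dividing \emph{all} of $d_1,\mathfrak d,d_2$, and then set $\mathfrak d=p^t\mathfrak l$. But in the target formula the vector $\mathfrak l$ ranges over \emph{all} of $\Z^n/p^\nu\Z^n$ subject only to $Q(\mathfrak l)\equiv 0\bmod p^t$, and the phase is $e_{p^\nu}(-(\mathfrak l,\mu))$, not $e_{p^{\nu-t}}(-(\mathfrak l,\mu))$. So $\mathfrak l$ is the \emph{undivided} middle block $\mathfrak d$, and $t$ is not the common $p$-content of $d$. The correct stratification is by $t=\nu_p(d_2)$ alone: write $d_2=p^t\alpha^*$ with $p\nmid\alpha^*$. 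The side condition $Q(\mathfrak l)\equiv 0\bmod p^t$ then arises because the congruence $d_1 d_2\equiv Q(\mathfrak d)\bmod p^\nu$ has a solution in $d_1$ only when $Q(\mathfrak d)$ lies in the image of multiplication by $p^t\alpha^*$, i.e.\ in $p^t\Z/p^\nu\Z$. The $p^t$ solutions $d_1=\beta^*+jp^{\nu-t}$, $0\le j<p^t$, then produce the geometric sum $\sum_{j}e_{p^t}(mj)$, and \emph{this} is what forces $t\le\nu_\lambda$ (it vanishes for $t>\nu_\lambda$ since $p\nmid p^{-\nu_\lambda}m$). Your primitivity argument for the bound $t\le\nu_\lambda$ does not survive once the unfolding has removed the primitivity constraint.

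\textbf{Where maximality enters.} Maximality is not used to kill a ``doubly-degenerate'' stratum after stratifying; it is used \emph{before} any stratification, to justify the unfolding
\[
b(\lambda,p^\nu)=\sum_{\substack{d\in\Z^{n+2}/p^\nu\Z^{n+2}\\ Q_0(d)\equiv 0\bmod p^\nu}} e_{p^\nu}((\lambda,d)_0).
\]
Passing from $\sum_{j\le\nu}a(\lambda,p^j)$ to this requires that every isotropic $g=(\delta,d,p^\nu)\in L_1$ factor as $p^{j}g^*$ with $g^*\in L_1$ and $\gcd(S_1 g^*)=1$; equivalently $\gcd(S_1 g)=\gcd(g)$. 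If this failed, say $p\mid\gcd(S_1g)$ but $p\nmid\gcd(g)$, then $g/p\in L_1^\sharp\setminus L_1$ would be isotropic with $Q_1(g/p)\in\Z$ and $p\mid\operatorname{ord}(g/p)$, contradicting Proposition~\ref{prop:local_appl}. Once this unfolded sum is in hand there is no primitivity left to invoke, and the configurations with $p\mid d_1$, $p\mid d_2$ genuinely contribute (they sit in the strata with $t\ge 1$ and suitable $\mathfrak l$). Your step~(e) ``reindex $\mathfrak l$'' therefore cannot be made to balance: the lift count $p^{nt}$ is not absorbed by the congruence $Q(\mathfrak l)\equiv 0\bmod p^t$, and the phase has the wrong modulus.
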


\begin{proof}
For the first step we prove the formula
\begin{align}\label{eq:conv_form}
  b(\lambda,\gamma)=\sum_{\begin{subarray}{c}
                                            v \in \Z^{n+2}/\gamma\Z^{n+2} \\
                                            Q_0(v) \equiv 0 \mod \gamma
                                           \end{subarray}} e_\gamma((\lambda, v)_0).
\end{align}
It is sufficient to consider only prime powers $\gamma=p^\nu$ by virtue of Lemma \ref{lem:Dirichlet_coeff_prop}(i). The Dirichlet convolution is certainly multiplicative, too.

One has
\begin{align*}
b(\lambda, p^\nu) &= (1*a(\lambda, \cdot)(p^\nu) = \sum_{j=0}^\nu a(\lambda,p^j) \\
&= \sum_{j=0}^\nu \sum_{\begin{subarray}{c}g=(*,d\T,p^j)\T \\ Q_1(g)=0 \\ \gcd(S_1 g)=1 \\ d:\Z^{n+2}/p^j\Z^{n+2} \end{subarray}} e_{p^\nu}\left( p^{\nu-j}(d,\lambda)_0\right).
\end{align*}

Let $g=(\delta,d,p^\nu)$ with $Q_0(d)\equiv 0 \mod p^\nu$. Choose $\delta$ such that $\gamma \delta = Q_0(d)$. In the ring $\Z/p^\nu \Z$, $\delta$ is uniquely determined. Note that $Q_0(d)$ is well-defined modulo $p^\nu$, since
\[
Q_0(d+p^\nu v) = Q_0(d)+ p^\nu (d,v)_0 + p^{2\nu} Q_0(v)
\]
for every $v \in \Z^{n+2}$.

If $\gcd(S_1 g)=\gcd(g)$ holds, then $\gcd\left( S_1 g^{*}\right)=1$, where $g^{*}=\frac{1}{\gcd(g)}g\in L_1$ is well-defined.
As a consequence of $\gamma = p^\nu$, we already know that $\gcd(g)=p^j$ for some $j\geq 0$ and the summand is
\[
e_{p^\nu}((d,\lambda)_0) = e_{p^\nu}(p^j(d^{*},\lambda)_0) = e_{p^{\nu-j}}((d^{*},\lambda),
\]
where $g^{*}$ a unique vector with $\gcd(S_1 g^{*})=1$.

The maximality of $L_p$ ensures that $\gcd(S_1 g)= \gcd(g)$ for all $Q_1(g)=0$ and $\gamma=p^\nu$. Otherwise, this would be a contradiction to Proposition \ref{prop:local_appl}.

Obviously, we have $b(\lambda,1)=1$. So let $\nu \geq 1$.

Due to Lemma \ref{lem:Dirichlet_coeff_prop} (ii) and (iii) we may additionally assume that $p\nmid p^{-\nu_\lambda}m$. Recall that $\nu_\lambda=\gcd(S_0 \lambda)$. The convoluted coefficient reads
\begin{align*}
  b(\lambda,p^\nu) &= \sum_{\begin{subarray}{c}d \mod p^\nu \\
                                            d_1 d_2 \equiv Q(\mathfrak{d}) \mod p^\nu
                                           \end{subarray}} e_{p^\nu}( d_1 m - (\mathfrak{d},\mu) + d_2 l).
%&= \sum_{\begin{subarray}{c}d \mod p^\nu \\
%                                            d_1 d_2 \equiv Q(\mathfrak{d}) \mod p^\nu
%                                           \end{subarray}} e_{p^{\nu-\nu_\lambda}}( d_1 m - (\mathfrak{d},\mu) + d_2 l).
 \end{align*}
 
Consider the homomorphism
 \[
  h_\alpha: \Z/p^\nu\Z \to \Z/p^\nu\Z, \beta \mapsto \alpha\beta
 \]
 with its kernel
 \begin{align*}
  \ker(h_\alpha) = \{ p^{\nu-t}+p^\nu\Z,2p^{\nu-t}+p^\nu\Z,\cdots,p^{\nu}\Z\},
 \end{align*}
 where $p^t=\gcd(\alpha,p^\nu)$.
 There are $p^t$ elements in the kernel and the image is given by
 \begin{align*}
  \im(h_\alpha) = \{ \alpha+p^\nu\Z, 2\alpha+p^\nu\Z,\cdots,p^{\nu-t}\alpha+p^\nu\Z \}
  = \{ p^t+p^\nu\Z, 2p^t+p^\nu\Z,\cdots,p^\nu\Z \}.
 \end{align*}
 When $Q(\mathfrak{l})\equiv 0 \mod p^t$, there exist $p^t$ solutions of
 \[
  \alpha \beta \equiv Q(\mathfrak{l}) \mod p^{\nu}
 \]
 for $\beta$. If $\beta^{*}$ is such a solution, the other solutions are given by $\beta^{*}+jp^{\nu-t}$,$j=0,...,p^{t}-1$.
 
 Otherwise, if $Q(\mathfrak{l})\not\equiv 0 \mod p^{t}$, then $Q(\mathfrak{l})+p^\nu\Z$ is not part of the image of $h_\alpha$.
 
 Now, let us reorder the finite sum in the following way: We sort $\alpha=d_2\in\Z/p^\nu\Z$ by its greatest common divisor with $p^\nu$ and put $\alpha = p^t \alpha^{*}$ with $p \nmid \alpha^{*}$. This gives
 \begin{align*}
  b(\lambda,&p^\nu) = \sum_{t=0}^\nu \sum_{\begin{subarray}{c}
                                           \alpha^{*}=1 \\ p \nmid \alpha^{*}
                                          \end{subarray}}^{p^{\nu-t}}
                                          \sum_{\begin{subarray}{c} \mathfrak{l}\in \Z^n/p^\nu\Z^n \\ Q(\mathfrak{l}) \equiv 0 \mod p^{t}
                                                \end{subarray} }\sum_{j=0}^{p^{t-1}} e_{p^{\nu}} (m(\beta^{*} +jp^{v-t})-(\mathrm{l}, \mu) +\alpha^{*} p^t l) \\
&=  \sum_{t=0}^\nu \sum_{\begin{subarray}{c}
                                           \alpha^{*}=1 \\ p \nmid \alpha^{*}
                                          \end{subarray}}^{p^{\nu-t}}
                                          \sum_{\begin{subarray}{c} \mathfrak{l}\in \Z^n/p^\nu\Z^n \\ Q(\mathfrak{l}) \equiv 0 \mod p^{t}
                                                \end{subarray} }e_{p^{\nu}} (\beta^{*} m-(\mathfrak{l}, \mu) +\alpha^{*} p^t l) \cdot \left(\sum_{j=0}^{p^t-1} \exp\left(\frac{2\pi i m}{p^t}j\right)\right).
 \end{align*}
We recognize a geometric sum which is given by
 \begin{align}\label{eq:geometric_sum}
  \sum_{j=0}^{p^t-1} \exp\left(\frac{2\pi i m}{p^t}j\right)= \begin{cases}p^t, & t \leq \nu_\lambda, \\
  0, & t > \nu_\lambda. \end{cases}
 \end{align}
 
 If $p^t|Q(\mathfrak{l})$, then the congruence is equivalent to
 \begin{align*}
 \alpha^{*} \beta^{*} \equiv \frac{Q(\mathfrak{l})}{p^t} \mod p^{\nu-t}.
 \end{align*}
 Due to $p\nmid \alpha^{*}$ we can invert $\alpha^{*}$ in $\Z/p^{\nu-t}\Z$ and obtain
 \[
 \beta^{*} \equiv {\alpha^{*}}^{-1} \frac{Q(\mathfrak{l})}{p^t} \mod p^{\nu-t}.
 \]

Whenever $t\leq \min(\nu,\nu_\lambda)$, the inner sum reads
 \begin{align*}
 &\sum_{\begin{subarray}{c}\alpha^{*}=1 \\ p \nmid \alpha^{*}
                                          \end{subarray}}^{p^{\nu-t}}
                                          \sum_{\begin{subarray}{c} \mathfrak{l}\in \Z^n/p^\nu\Z^n \\ Q(\mathfrak{l}) \equiv 0 \mod p^{t}
                                                \end{subarray} }
 e_{p^\nu}(-(\mathfrak{l},\mu)) \cdot e_{p^{\nu-t}} (\beta^{*} m p^{-t} +\alpha^{*} l)\\
 =& \sum_{\begin{subarray}{c}\alpha^{*}=1 \\ p \nmid \alpha^{*}
                                          \end{subarray}}^{p^{\nu-t}}
                                          \sum_{\begin{subarray}{c} \mathfrak{l}\in \Z^n/p^\nu\Z^n \\ Q(\mathfrak{l}) \equiv 0 \mod p^{t}
                                                \end{subarray} }
e_{p^\nu}(-(\mathfrak{l},\mu)) \cdot e_{p^{\nu-t}}\left({\alpha^{*}}^{-1} \cdot \frac{Q(\mathfrak{l})}{p^t} \cdot m p^{-t} + \alpha^{*} l \right).
 \end{align*}
 Recognizing the Kloosterman sum in the formula completes the proof.
\end{proof}

\begin{remark}\label{rm:Bruinier}
\begin{enumerate}[(i)]
\item Let $L$ be maximal and $k>n+2$ be even. Let $\lambda \in L_0\dual\cap \Pos_S$ and $\gcd(S_0 \lambda)=1$. Then one has
\[
b(\lambda,p^\nu) = \sum_{\begin{subarray}{c} \mathfrak{l}\in \Z^n/p^\nu\Z^n \end{subarray} } \left\{
 e_{p^{\nu}}\left(\left(\mathfrak{l},\mu \right)\right) \cdot K\left(l,Q(\mathfrak{l})m,p^{\nu}\right)\right\}.
\]
\item It follows from (i) that the Fourier-Jacobi coefficient of index $1$ is a Jacobi-Eisenstein series and it is also linked to vector valued Eisenstein series examined in \cite{Bruinier_Eisenstein}. For details, refer to \cite{Schaps_Diss}.
\item Considering $E_{k,S,c}^{*}$, for even $k>n+2$, the formula in Theorem \ref{thm:Fourier1} for the Fourier coefficients is very similar
\begin{align}\label{eq:tilde_Fourier}
a_{{\EkCuspC}^{*}}(\lambda) = \frac{\ConstCusp}{2\zeta(k)} \cdot Q_0(\lambda)^{k-\frac{n+2}{2}} \sum_{\gamma \in \N} \gamma^{-k}\sum_{\begin{subarray}{c} g = (\delta, d,\gamma) \in L_1\pm c \\ Q_1(g)= 0 \\ d : L_0\dual / \gamma L_0 \end{subarray}}  \exp\left(\frac{2\pi i}{\gamma} (\lambda,d)_0 \right).
\end{align}
The Dirichlet series in (\ref{eq:tilde_Fourier}) equals $L(s,b(\lambda,\cdot))$. Hence, Proposition \ref{prop:conv_form} holds for $E_{k,S}^{*}$, too.
Moreover, the assumption on $L_p$ to be maximal can be dropped. The assumption in the proof of Proposition \ref{prop:conv_form} was necessary to get rid of the primitivity condition on the vectors.
In the following results, the assumption on the maximality of $L_p$ is only a consequence of Proposition \ref{prop:conv_form}.
\end{enumerate}
\end{remark}

In the case of primitive vectors, the Fourier expansion has already been considered several times. We extend the results in \cite{Bruinier_Eisenstein} and \cite{Mocanu_PhD} for the non-primitive cases.
The procedure is pretty much the same. The calculations become slightly longer.

\begin{theorem}\label{thm:Fourier2}
Consider the product of the Dirichlet series
\[
\zeta(s-n)\cdot L(s,b(\lambda,\cdot)) = \prod_{p \in \P} \sum_{\nu = 0}^\infty N^{*}(\lambda,p^\nu) p^{\nu(1- s)}.
\]
Let $p \in \P$ and the localization $L_p$ be maximal. 
Let $\ell_\lambda:= \min\{ n \in \N;\, n \cdot \lambda \in L_0\}$ denote the level of $\lambda \in L_0\dual$ and $r := \nu_p(\ell_\lambda)$ its $p$-adic valuation. As before we set $\nu_\lambda:= \nu_p(\gcd(S_0 \lambda))$.

The representation number $N^{*}(\lambda,p^\nu)$ is a sum
\[
N^{*}(\lambda,p^\nu) = \sum_{t=0}^{\min(\nu,\nu_\lambda)} N(\lambda,p^\nu,p^t).
\]

The representation numbers satisfy the following properties:
\begin{enumerate}[(i)]
\item For any $\nu \geq \nu_\lambda\geq t$ the representation number is given by
\begin{align*}
N(\lambda,p^\nu,p^t) = \#\left\{ v \in \Z^n/p^{\nu+r}\Z^n; \begin{array}{l} \, \,\,\, v\, \, \, \, \equiv -p^{r+t-\nu_\lambda} \mu \,\,\,\,\,\,\,\,\,\, \mod p^r \\ Q(v) \equiv - Q_0(p^{r-(\nu_\lambda-t)} \lambda) \mod p^{\nu-(\nu_\lambda-t)+2r}\end{array} \right\}.
\end{align*}
\item For any $t \leq \nu < \nu_\lambda$
\[
N(\lambda,p^\nu,p^t) = \#\{ v \in \Z^n/p^\nu\Z^n; \, Q(v) \equiv 0 \mod p^t\}.
\]
\item For any $\nu \geq 0$ we have
\[
N(\lambda,p^\nu,p^{t^{*}}) = \#\left\{ v \in \Z^n/p^{\nu+r}\Z^n; \begin{array}{l} \, \,\,\, v\, \, \, \, \equiv -p^{r} \mu \,\,\,\,\,\,\,\,\,\, \mod p^r \\ Q(v) \equiv - Q_0(p^{r} \lambda) \mod p^{\nu+2r}\end{array} \right\}
\]
for $t^{*} = \min(\nu,\nu_\lambda)$.
\item Put $w_p = 1+2\nu_p(2\ell_\lambda Q_0(\lambda))$. Then the equality
 \[
  N^{*}(\lambda,p^{\nu +1}) = p^{n-1} N^{*}(\lambda,p^\nu)
 \]
 holds for any $\nu \geq w_p$.
\end{enumerate}
\end{theorem}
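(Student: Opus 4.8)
The plan is to argue at a fixed prime $p$ with $L_p$ maximal, taking as the sole input the closed formula for the convoluted coefficient $b(\lambda,p^\nu)$ from Proposition~\ref{prop:conv_form} (so, as in Remark~\ref{rm:Bruinier}(iii), maximality of $L_p$ enters only through that proposition). Since the $p$-Euler factor of $\zeta(s-n)$ is $(1-p^{n-s})^{-1}=\sum_{j\ge0}p^{jn}p^{-js}$, comparing Euler factors in $\zeta(s-n)L(s,b(\lambda,\cdot))=\prod_p\sum_\nu N^{*}(\lambda,p^\nu)p^{\nu(1-s)}$ gives
\[
p^\nu\,N^{*}(\lambda,p^\nu)=\sum_{j=0}^\nu p^{jn}\,b(\lambda,p^{\nu-j}).
\]
Now perform this convolution termwise in the parameter $t$ of Proposition~\ref{prop:conv_form}: that formula ranges over $0\le t\le\min(\nu,\nu_\lambda)$, and convolving with $\sum_j p^{jn}x^j$ cannot enlarge the $t$-range, so one may \emph{define} $p^\nu N(\lambda,p^\nu,p^t)$ to be the fixed-$t$ contribution to the right-hand side. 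Then $N^{*}(\lambda,p^\nu)=\sum_{t=0}^{\min(\nu,\nu_\lambda)}N(\lambda,p^\nu,p^t)$ holds by construction, and everything reduces to evaluating one such piece and checking it is the nonnegative integer claimed in (i)--(iii).

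For fixed $t$ I would expand the Kloosterman sum $K\bigl(l,mQ(\mathfrak l)p^{-2t};p^{\nu-t}\bigr)$ into its defining sum over units $j\bmod p^{\nu-t}$ and interchange it with the sum over $\mathfrak l\bmod p^\nu$ (subject to $p^t\mid Q(\mathfrak l)$), the character $e_{p^\nu}\bigl(-(\mathfrak l,\mu)\bigr)$, and the $\zeta$-index weighted by powers of $p^{n}$. Running the character sum in the $j$-variable backwards — introducing $j^{-1}$, completing, and recognizing $\sum_j e_q(\cdot)$ as the count of $v$ with $Q(v)$ in a prescribed residue class — converts the whole expression into a representation count for the $n$-ary form with Gram matrix $S$, the powers of $p^{n}$ accounting exactly for rescaling the representing vector by powers of $p$. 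The only delicate bookkeeping is that $\lambda\in L_0^{\#}\setminus L_0$: clearing the denominator of $\lambda$ brings in the level $\ell_\lambda$, and hence the side condition ``$v$ in a fixed residue class modulo $p^r$'' and the quadratic modulus $p^{\nu-(\nu_\lambda-t)+2r}$. This yields assertion (i) in the generic range $\nu\ge\nu_\lambda$. Assertion (ii) drops out of the same computation once one notes that for $\nu<\nu_\lambda$ the target $-Q_0(p^{\,\cdot}\lambda)$ is divisible by a power of $p$ at least equal to the modulus — this is also why the geometric-sum dichotomy~\eqref{eq:geometric_sum} kills the $t>\nu_\lambda$ terms — so the quadratic congruence collapses to $Q(v)\equiv0\bmod p^t$ and the linear shift disappears. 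Finally (iii) is the substitution $t=t^{*}=\min(\nu,\nu_\lambda)$ into (i)/(ii), the two regimes agreeing on the overlap.

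Given the explicit representation-number shape of $N(\lambda,p^\nu,p^t)$, part (iv) is a Hensel-lifting count. Write $M$ for the relevant quadratic modulus and $c$ for the target $-Q_0(p^{\,\cdot}\lambda)$. The definition $w_p=1+2\nu_p\bigl(2\ell_\lambda Q_0(\lambda)\bigr)$ is arranged so that for $\nu\ge w_p$ one has $\nu\ge\nu_\lambda$ (so the outer sum over $t$ has the fixed finite range $0\le t\le\nu_\lambda$) and $M$ is large relative to $\nu_p(2c)$ and to $\nu_p(\det S)$, the latter being bounded since $L_p$ is maximal. For such $M$, every solution $v$ of $Q(v)\equiv c\bmod p^M$ has $2Sv\not\equiv0\bmod p$, so the linearization $w\mapsto (Sv)^{tr}w$ is surjective mod $p$ and $v$ lifts to exactly $p^{n-1}$ solutions modulo $p^{M+1}$ ($n$ variables, one nondegenerate equation), while the congruence modulo $p^r$ is untouched. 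Summing over the fixed range of $t$ gives $N^{*}(\lambda,p^{\nu+1})=p^{n-1}N^{*}(\lambda,p^\nu)$.

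The main obstacle is the middle step: pushing the Kloosterman sums through the convolution with $\zeta(s-n)$ and landing on representation numbers with \emph{precisely} the stated moduli and the level shift $p^r$; this is the ``slightly longer'' computation alluded to before the theorem, and it also subsumes verifying that the fixed-$t$ contribution is $p^\nu$ times a nonnegative integer. By contrast, once the representation-number description is in hand, (ii) and (iii) are bookkeeping and (iv) is the standard stabilization of local representation densities. A useful consistency check is the primitive case $\gcd(S_0\lambda)=1$, i.e.\ $\nu_\lambda=0$: then only the $t=0$ term survives, and the formulas must collapse to those recorded in Remark~\ref{rm:Bruinier}(i) and to the computations of \cite{Bruinier_Eisenstein}.
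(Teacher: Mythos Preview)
Your strategy matches the paper's: start from Proposition~\ref{prop:conv_form}, unwind the exponential sums into representation counts, then read off (i)--(iii) and obtain (iv) by Hensel-type stabilization (the paper simply cites \cite[Lemma~5]{Bruinier_Eisenstein} for this last step). Two points are worth flagging.

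First, the order of operations differs. You propose to convolve with $\zeta(s-n)$ \emph{first} and then evaluate the fixed-$t$ pieces. The paper instead evaluates $b(\lambda,p^\nu)$ directly and finds it has the shape
\[
b(\lambda,p^\nu)=p^\nu\sum_{t}N(\lambda,p^\nu,p^t)-p^{\nu-1+n}\sum_{t}N(\lambda,p^{\nu-1},p^t),
\]
so that after convolving with $\sum_j p^{jn}x^j$ the cross-terms telescope and one is left with $c(\lambda,p^\nu)=p^\nu N^{*}(\lambda,p^\nu)$. This telescoping is why $N^{*}$ is a genuine count rather than an alternating sum; in your ordering you would have to check this cancellation by hand inside each $t$-block.

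Second, and more substantively, your description of the key step (``introducing $j^{-1}$, completing'') skips the actual mechanism. What makes the Kloosterman sum collapse is the substitution $\mathfrak l\mapsto \alpha^{*}\mathfrak l$ in the $\mathfrak l$-sum: since $Q$ is quadratic, $Q(\alpha^{*}\mathfrak l)=(\alpha^{*})^{2}Q(\mathfrak l)$, and the factor $(\alpha^{*})^{-1}$ in the Kloosterman term cancels against one $\alpha^{*}$, leaving everything \emph{linear} in $\alpha^{*}$. The sum over units $\alpha^{*}$ is then a Ramanujan sum, which one splits as a difference of two full geometric sums to obtain the representation numbers. Without this quadratic substitution the $j^{-1}$ does not go away and there is no clean count to recognize; this is the ``slightly longer computation'' you allude to but do not carry out.
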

\begin{proof}
We continue our calculations where we ended up with in Proposition \ref{prop:conv_form}.

First, we consider the inner sums again:
\begin{align*}
&\sum_{\begin{subarray}{c} \mathfrak{l}\in \Z^n/p^\nu\Z^n \\ Q(\mathfrak{l}) \equiv 0 \mod p^{t}
                                                \end{subarray} } \left\{
 e_{p^{\nu}}\left(-\left(\mathfrak{l},\mu \right)\right) \cdot K\left(l,\frac{m Q(\mathfrak{l})}{p^{2t}};p^{\nu-t}\right)\right\}\\ =& \sum_{\begin{subarray}{c}\alpha^{*}=1 \\ p \nmid \alpha^{*}
                                          \end{subarray}}^{p^{\nu-t}}
                                          \sum_{\begin{subarray}{c} \mathfrak{l}\in \Z^n/p^\nu\Z^n \\ Q(\mathfrak{l}) \equiv 0 \mod p^{t}
                                                \end{subarray} }
 e_{p^{\nu-t}}\left({\alpha^{*}}^{-1} \cdot \frac{m Q(\mathfrak{l})}{p^{2t}} -p^{-t}(\mathfrak{l},\mu) + \alpha^{*} l \right).
\end{align*}

By replacing $\mathfrak{l}$ by $\alpha^{*} \mathfrak{l}$ a permutation of the summands in the inner sum is performed, since $p\nmid \alpha^{*}$.
We get
\begin{align}
&\sum_{\begin{subarray}{c}\alpha^{*}=1 \\ p \nmid \alpha^{*}
                                          \end{subarray}}^{p^{\nu-t}}
                                          \sum_{\begin{subarray}{c} \mathfrak{l}\in \Z^n/p^\nu\Z^n \\ Q(\mathfrak{l}) \equiv 0 \mod p^{t}
                                                \end{subarray} }
 e_{p^{\nu-t}}\left({\alpha^{*}}^{-1} \cdot \frac{m Q(\alpha^{*} \mathfrak{l})}{p^{2t}} - \left(\alpha^{*}\mathfrak{l}, \mu p^{-t}\right) +\alpha^{*} l\right)\nonumber \\
 =& \sum_{\begin{subarray}{c} \mathfrak{l}\in \Z^n/p^\nu\Z^n \\ Q(\mathfrak{l}) \equiv 0 \mod p^{t}
                                                \end{subarray} }
                                                \sum_{\begin{subarray}{c}\alpha^{*}=1 \\ p \nmid \alpha^{*}
                                          \end{subarray}}^{p^{\nu-t}}
 e_{p^{\nu-t}}\left(\alpha^{*} \cdot\left( \frac{m Q(\mathfrak{l})}{p^{2t}} -\left(\mathfrak{l},\mu p^{-t}\right)+l \right) \right).\label{eq:Gauss_sum1}
\end{align}
The inner sum is a Ramanujan sum. In this case, evaluate the sum by splitting it into two geometric sums:
\begin{align*}
&\sum_{\begin{subarray}{c}\alpha^{*}=1\end{subarray}}^{p^{\nu-t}}
 e_{p^{\nu-t}}\left(\alpha^{*}\left(\frac{m Q(\mathfrak{l})}{p^{2t}}
 -\left(\mathfrak{l},\mu p^{-t}\right)+l \right) \right) \\
 -&\sum_{\begin{subarray}{c}\alpha^{*}=1\end{subarray}}^{p^{\nu-t-1}}
 e_{p^{\nu-t}}\left(\alpha^{*}p \left( \frac{m Q(\mathfrak{l})}{p^{2t}} -\left(\mathfrak{l},\mu p^{-t}\right)+l \right) \right).
\end{align*}

Evaluating similarly to (\ref{eq:geometric_sum}) we obtain that (\ref{eq:Gauss_sum1}) becomes
\begin{align*}
&p^{\nu-t} \cdot \#\left\{ \mathfrak{l} \in \Z^n/p^\nu\Z^n; \,\begin{array}{l} Q(\mathfrak{l})\equiv 0 \mod p^t, \\
\frac{m Q(\mathfrak{l})}{p^t}-(\mathfrak{l},\mu)+p^{t} l \equiv 0 \mod p^\nu\end{array} \right\}\\
-& p^{\nu-t-1} \cdot \#\left\{ \mathfrak{l} \in \Z^n/p^\nu\Z^n; \,\begin{array}{l} Q(\mathfrak{l})\equiv 0 \mod p^t, \\ \frac{m Q(\mathfrak{l})}{p^t}-(\mathfrak{l},\mu)+p^{t} l \equiv 0 \mod p^{\nu-1}\end{array} \right\},
\end{align*}
if $t<\nu$. Using the notation of 
\[
N(\lambda,p^\nu,p^t)=\#\left\{ \mathfrak{l} \in \Z^n/p^\nu\Z^n; \begin{array}{l} Q(\mathfrak{l})\equiv 0 \mod p^t,\\ \frac{m Q(\mathfrak{l})}{p^t}-(\mathfrak{l},\mu)+p^{t} l \equiv 0 \mod p^\nu\end{array} \right\},
\]
(\ref{eq:Gauss_sum1}) equals
\[
p^{\nu-t}N(\lambda,p^\nu,p^t) - p^{\nu-t-1} p^n N(\lambda,p^{\nu-1},p^t).
\]
Note that the representation number $N(\lambda,p^\nu,p^t)$ can be expressed as 
\begin{align}\label{eq:rep_final}
N(\lambda,p^\nu,p^t) = \left\{v\in \Z^n/p^\nu\Z^n; \, \begin{array}{l} Q(v)\equiv 0 \mod p^t,\\ p^{\nu_\lambda-t}Q(v)-(v,\mu)+p^{t-\nu_\lambda} lm \equiv 0 \mod p^\nu\end{array}\right\},
\end{align}
since $mp^{-\nu_\lambda}$ is invertible in $\Z/p^\nu\Z$.

In summary, we have computed
\begin{align}
b(\lambda,p^\nu) &= \sum_{t=0}^{\min(\nu,\nu_\lambda)} p^t p^{\nu-t} N(\lambda,p^\nu,p^t) - \sum_{t=0}^{\min(\nu-1,\nu_\lambda)} p^t p^{\nu-t-1} p^n N(\lambda,p^{\nu-1},p^t)\nonumber \\
&= \sum_{t=0}^{\min(\nu,\nu_\lambda)} p^{\nu} N(\lambda,p^\nu,p^t) - \sum_{t=0}^{\min(\nu-1,\nu_\lambda)} p^{\nu-1} p^n N(\lambda,p^{\nu-1},p^t)\label{eq:Gauss_sum2}
\end{align}

Let us compute the coefficients for $L(s,c(\lambda,\cdot)) := \zeta(s-n)\cdot L(s,b(\lambda,\cdot))$. The \emph{Dirichlet convolution} leads to
\begin{align*}
\zeta(s-n) \cdot L(s,b(\lambda,\cdot)) = \prod_{p \in \P} \sum_{\nu=0}^\infty c(\lambda,p^\nu) p^{-\nu s},
\end{align*}
where
\[
c(\lambda,p^\nu) = \sum_{\delta | p^\nu} \left(\frac{p^\nu}{\delta}\right)^n \cdot a(\lambda,\delta) = \sum_{\nu_\delta=0}^\nu p^{(\nu-\nu_\delta)n} \cdot a(\lambda,p^{\nu_\delta}).
\]
Plugging in the formula in (\ref{eq:Gauss_sum2}) yields
\begin{align*}
c&(\lambda,p^\nu) = \sum_{\nu_\delta=0}^\nu p^{(\nu-\nu_\delta)n} \left(\sum_{t=0}^{\min(\nu_\delta,\nu_\lambda)} p^{\nu_\delta} N(\lambda,p^{\nu_\delta},p^t) - \sum_{t=0}^{\min(\nu_\delta-1,\nu_\lambda)} p^{\nu_\delta-1} p^n N(\lambda,p^{\nu_\delta-1},p^t)\right) \\
&= \sum_{\nu_\delta=0}^\nu  \sum_{t=0}^{\min(\nu_\delta,\nu_\lambda)} p^{(\nu-\nu_\delta)n + \nu_\delta} N(\lambda,p^{\nu_\delta},p^t) - \sum_{\nu_\delta=0}^{\nu} \sum_{t=0}^{\min(\nu_\delta-1,\nu_\lambda)} p^{(\nu-\nu_\delta)n+\nu_\delta-1+n} N(\lambda,p^{\nu_\delta-1},p^t).
\end{align*}
Recognizing a telescoping sum, we obtain
\[
c(\lambda,p^\nu) = \sum_{t=0}^{\min(\nu,\nu_\lambda)} p^{\nu} N(\lambda,p^{\nu},p^t) = p^\nu N^{*}(\lambda,p^\nu).
\]
The formula for $N(\lambda,p^\nu,p^t)$ can be simplified in every case.

\begin{enumerate}[(i)] 
\item Let $\nu \geq \nu_\lambda$.
The term
\[
p^{\nu_\lambda-t}Q(v)-(v,\mu)+p^{-\nu_\lambda+t} lm
\]
is always divisible by $p^{\nu_\lambda-t}$. Hence, $N(\lambda,p^\nu,p^t)$ equals
\[
\#\left\{ v \in \Z^n/p^\nu\Z^n; \begin{array}{l} Q(v)\equiv 0 \mod p^t,\\ Q(v) -(v,p^{t-\nu_\lambda} \mu)+p^{2(t-\nu_\lambda)} lm \equiv 0 \mod p^{\nu-(\nu_\lambda-t)}\end{array} \right\}.
\]
Here we can drop the first condition, since $p^t|(x,p^{t-\nu_\lambda} \mu)$ and $p^t|p^{2(t-\nu_\lambda)}lm$.
If the second congruence is satisfied, this already implies that $p^t|Q(v)$.
Consequently, $N(\lambda,p^\nu,p^t)$ is equal to
\begin{align}
\label{eq:rep_form1}
\#\left\{ v \in \Z^n/p^\nu\Z^n; \, Q(v) -(v,p^{t-\nu_\lambda} \mu)+p^{2(t-\nu_\lambda)} lm \equiv 0 \mod p^{\nu-(\nu_\lambda-t)} \right\}.
\end{align}
Put $\Delta := Q_0(p^{-\nu_\lambda}\lambda)$. A computation yields
\begin{align}
N(\lambda,p^\nu,p^t)&=\#\left\{ \mathfrak{l} \in \Z^n/p^\nu\Z^n; p^{2r} Q(\mathfrak{l}-p^{t-\nu_\lambda} \mu) + p^{2r+2t} \Delta \equiv 0 \mod p^{\nu-(\nu_\lambda-t)+2r} \right\}\nonumber \\
&=\#\left\{ \mathfrak{l} \in \Z^n/p^\nu\Z^n; Q(p^r \mathfrak{l}- p^{t-\nu_\lambda} p^r\mu) + p^{2r+2t} \Delta \equiv 0 \mod p^{\nu-(\nu_\lambda-t)+2r} \right\} \nonumber \\
&=\#\left\{ v \in \Z^n/p^{\nu+r}\Z^n; \begin{array}{l} \, \,\,\, v\, \, \, \, \equiv p^rp^{t-\nu_\lambda} \mu \,\,\,\,\,\,\,\,\,\, \mod p^r \\ Q(v) \equiv - p^{2r+2t} \Delta \mod p^{\nu-(\nu_\lambda-t)+2r}\end{array} \right\}.\label{eq:Bruinier_form} 
\end{align}

\item Whenever $\nu <\nu_\lambda$, the congruence
\[
p^{\nu_\lambda-t}Q(v)-(v,\mu)+p^{-\nu_\lambda+t} lm \equiv 0 \mod p^\nu
\]
is trivial and can be dropped due to $p^t|Q(v)$.

\item This is just a combination of (i) and (ii). Note that $p^{\nu+2r} | Q_0(p^r\lambda)$ if $\nu < \nu_\lambda$.

\item Use Lemma 5 in \cite{Bruinier_Eisenstein} with $\widetilde{w_p}=1+2\nu_p(2\ell_\lambda p^{2t}\Delta)-(\nu_\lambda-t)$.
\end{enumerate}
\end{proof}

Define the \emph{local Euler factor}
\[
L_{\lambda}(s,p) =  (1-p^{n-s}) \sum_{\nu=0}^{w_p-1} N^{*}(\lambda,p^\nu) p^{\nu(1-s)} + N^{*}(\lambda,p^{w_p})p^{w_p(1-s)}
\]
and we simply have
\[
L(s,b(\lambda,\cdot)) = \prod_{p \in \P} L_{\lambda}(s,p).
\]
In case of a non-maximal lattice $L$ we can follow the same procedure and get a product into local Euler factors
\[
L(s,b(\lambda,\cdot)) = \prod_{\begin{subarray}{c}p \in \P\\ L_p \text{ maximal}\end{subarray}} L_{\lambda}(s,p) \cdot \prod_{\begin{subarray}{c}p \in \P\\ L_p \text{ not maximal}\end{subarray}} L_{\lambda}(s,p).
\]
The local Euler factors where $L_p$ is not maximal are just finitely many ones. Following the method of \cite{Bruinier_Eisenstein}, we obtain
\begin{theorem}\label{thm:Fourier3}
Let $\lambda\in L_0\dual\cap\Pos_S$ and $k>n+2$ even.
Let $\chi_\mathcal{D}(m) = \left(\frac{\mathcal{D}}{m}\right)$.
%\begin{itemize}
If $n$ is even, put $\mathcal{D}=(-1)^{n/2}\cdot \det(S)$. Then the Fourier coefficient equals
\begin{align*}
a_{E_{k,S}}(\lambda) = \frac{\ConstCusp}{\zeta(k)\cdot L(k-\frac{n}{2},\chi_\mathcal{D})} \cdot Q_0(\lambda)^{k-\frac{n+2}{2}} \cdot \prod_{p|2 \ell_\lambda Q_0(\lambda) \det(S)} \frac{L_{\lambda}(k,p)}{1-\chi_\mathcal{D}(p)p^{\frac{n}{2}-k} }.
\end{align*}
If $n$ is odd, write $Q_0(\lambda) = n_0 f^2$, where $n_0\in \Q$ and $f\in \N$, such that $(f,2\det(S))=1$ and $\nu_{q}(n_0)\in\{0,1\}$ for all primes $q$ with $(q,2\det(S))=1$.
 Let $n_1:=n_0 \ell_\lambda^2$ and $\mathcal{D}=2(-1)^{(n+1)/2}n_1\det(S)$. Then the Fourier coefficient equals
 \begin{align*}
 a_{E_{k,S}}(\lambda) = \frac{\ConstCusp \cdot L(k-\frac{n+1}{2},\chi_\mathcal{D})}{\zeta(k)\cdot \zeta(2k-(n+1))} \cdot Q_0(\lambda)^{k-\frac{n+2}{2}} \cdot \prod_{p|2 \ell_\lambda^2 Q_0(\lambda)\det(S)} \frac{1-\chi_\mathcal{D}(p)p^{\frac{n+1}{2}-k}}{1-p^{n+1-2k}}L_\lambda(k,p).
 \end{align*}
\end{theorem}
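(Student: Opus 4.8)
The plan is to express the interior Fourier coefficient as a value of a Dirichlet $L$-series, to factor that series into an Euler product whose local factors are governed by the representation numbers of Theorem~\ref{thm:Fourier2}, to compute these Euler factors at the primes $p\nmid 2\ell_\lambda Q_0(\lambda)\det(S)$ by the classical count of points on a quadric over $\Fp$, and finally to reassemble the product into the $L$-functions of the statement; this is the argument of \cite{Bruinier_Eisenstein} transported to the present situation.

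\emph{Reduction to $L(k,b(\lambda,\cdot))$.} As $\lambda\in L_0\dual\cap\Pos_S$ is an interior lattice vector, Corollary~\ref{lem:EisSing} shows that the singular term contributes nothing, so $a_{E_{k,S}}(\lambda)=a_{\EkCusp}(\lambda)$. In Theorem~\ref{thm:Fourier1} with $c=e_1$ one has $L_1-c=L_1$, and the substitution $g\mapsto-g$ identifies the contributions of $\gamma<0$ and $\gamma>0$ (the automorphy factor is unchanged since $k$ is even, and $\tfrac1\gamma(\lambda,d)_0$ is unchanged since its numerator and denominator change sign together); thus $a_{E_{k,S}}(\lambda)=\ConstCusp\,Q_0(\lambda)^{k-\frac{n+2}{2}}L(k,a(\lambda,\cdot))$. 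Since $b(\lambda,\cdot)=1*a(\lambda,\cdot)$ we have $L(s,b(\lambda,\cdot))=\zeta(s)L(s,a(\lambda,\cdot))$, so
\[
a_{E_{k,S}}(\lambda)=\frac{\ConstCusp}{\zeta(k)}\,Q_0(\lambda)^{k-\frac{n+2}{2}}L(k,b(\lambda,\cdot)).
\]
By Lemma~\ref{lem:Dirichlet_coeff_prop}(i) the function $b(\lambda,\cdot)$ is multiplicative, so $L(s,b(\lambda,\cdot))=\prod_p\sum_{\nu\geq0}b(\lambda,p^\nu)p^{-\nu s}$. From $\zeta(s-n)L(s,b(\lambda,\cdot))=\prod_p\sum_\nu N^{*}(\lambda,p^\nu)p^{\nu(1-s)}$ and the stabilization $N^{*}(\lambda,p^{\nu+1})=p^{n-1}N^{*}(\lambda,p^\nu)$ for $\nu\geq w_p$ -- which is Theorem~\ref{thm:Fourier2}(iv) when $L_p$ is maximal, and follows from the same computation for the finitely many non-maximal $p$ (all dividing $\det(S)$) -- summing the geometric tail from $\nu=w_p$ and multiplying by $1-p^{n-s}$ gives $\sum_\nu b(\lambda,p^\nu)p^{-\nu s}=L_\lambda(s,p)$, hence $L(s,b(\lambda,\cdot))=\prod_p L_\lambda(s,p)$.

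\emph{The good Euler factors and reassembly.} Let $p\nmid 2\ell_\lambda Q_0(\lambda)\det(S)$ (for odd $n$ this is the same as $p\nmid 2\ell_\lambda^2 Q_0(\lambda)\det(S)$). Then $r=\nu_p(\ell_\lambda)=0$; moreover $p\nmid\gcd(S_0\lambda)$ because $p\nmid 2\det(S)$ and $p\nmid Q_0(\lambda)$, so $\nu_\lambda=0$ and $w_p=1+2\nu_p(2\ell_\lambda Q_0(\lambda))=1$. Hence $L_\lambda(k,p)=(1-p^{n-k})+N^{*}(\lambda,p)p^{1-k}$ (using $N^{*}(\lambda,1)=1$), and by Theorem~\ref{thm:Fourier2}(i) the number $N^{*}(\lambda,p)=N(\lambda,p,1)$ equals the number of $v\in\Z^n/p\Z^n$ with $Q(v)\equiv-Q_0(\lambda)\bmod p$, a nondegenerate quadric over $\Fp$ with nonzero value. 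The classical point count gives $N^{*}(\lambda,p)=p^{n-1}-\chi_{\mathcal{D}}(p)p^{n/2-1}$ for $n$ even and $N^{*}(\lambda,p)=p^{n-1}+\chi_{\mathcal{D}}(p)p^{(n-1)/2}$ for $n$ odd, with $\mathcal{D}$ as in the statement; cancelling $p^{n-k}$ then yields
\[
L_\lambda(k,p)=1-\chi_{\mathcal{D}}(p)p^{\frac n2-k}\ \ (n\text{ even}),\qquad
L_\lambda(k,p)=\frac{1-p^{n+1-2k}}{1-\chi_{\mathcal{D}}(p)p^{\frac{n+1}{2}-k}}\ \ (n\text{ odd}).
\]
Splitting $\prod_pL_\lambda(k,p)$ into the good and the finitely many remaining primes and completing the good-prime products to products over all primes gives
\[
\prod_{p\text{ good}}L_\lambda(k,p)=\frac{1}{L(k-\tfrac n2,\chi_{\mathcal{D}})}\prod_{p\text{ bad}}\frac{1}{1-\chi_{\mathcal{D}}(p)p^{n/2-k}}\qquad(n\text{ even}),
\]
\[
\prod_{p\text{ good}}L_\lambda(k,p)=\frac{L(k-\tfrac{n+1}{2},\chi_{\mathcal{D}})}{\zeta(2k-n-1)}\prod_{p\text{ bad}}\frac{1-\chi_{\mathcal{D}}(p)p^{(n+1)/2-k}}{1-p^{n+1-2k}}\qquad(n\text{ odd}),
\]
where ``bad'' means $p\mid 2\ell_\lambda Q_0(\lambda)\det(S)$, resp.\ $p\mid 2\ell_\lambda^2 Q_0(\lambda)\det(S)$. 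Multiplying the bad Euler factors $L_\lambda(k,p)$ back in and substituting into the formula above produces the two displayed identities.

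\emph{Main obstacle.} The crux is the evaluation of $N^{*}(\lambda,p)$ at the good primes as a quadric point count and, above all, pinning down the discriminant $\mathcal{D}$. For odd $n$ the count over $\Fp$ genuinely depends on the square class of $-Q_0(\lambda)=-n_0 f^2$, and, together with the factor arising from the normalization $Q=\tfrac12(\cdot,\cdot)$, this is exactly what forces the quantity $n_1=n_0\ell_\lambda^2$ and the sign $2(-1)^{(n+1)/2}$ into $\mathcal{D}$; keeping track of this (and of the decomposition $Q_0(\lambda)=n_0 f^2$) is the delicate point. The telescoping and geometric-series manipulations and the bookkeeping of the constants $\ConstCusp$ and $\zeta(k)$ are routine.
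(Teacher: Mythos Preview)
Your proof is correct and follows precisely the approach the paper indicates: the paper's own ``proof'' consists of the single sentence ``Following the method of \cite{Bruinier_Eisenstein}, we obtain'' together with the preceding definition of $L_\lambda(s,p)$, and you have faithfully carried out that method---reducing to $L(k,b(\lambda,\cdot))$, evaluating the good local Euler factors via the quadric point count over $\Fp$, and reassembling the product into the global $L$-functions. Your handling of the constants, the parity split, and the identification of $\mathcal{D}$ matches what the paper intends; the one place where both you and the paper wave hands equally is the stabilization $N^{*}(\lambda,p^{\nu+1})=p^{n-1}N^{*}(\lambda,p^{\nu})$ at the finitely many non-maximal primes, which is harmless since those primes sit in the ``bad'' product anyway.
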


\begin{corollary}\label{cor:rational}
Let $L$ be maximal. The Fourier coefficients $a_{E_{k,S}}(\lambda)$ are rational numbers.
\end{corollary}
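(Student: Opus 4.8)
The plan is to read off rationality directly from the closed formulas in Theorem~\ref{thm:Fourier3}, checking each factor, and then combine them; the boundary coefficients (those with $\lambda\in\partial\Pos_S$) and the constant term $a_{E_{k,S}}(0)=1$ are already manifestly rational from Corollary~\ref{lem:EisSing} and the computation of the singular term preceding it, since there every power of $\pi$ is accounted for by $B_k$ and $\zeta(k)$.

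First I would dispose of the arithmetic part. The local Euler factors $L_\lambda(k,p)$ are polynomial expressions in integer powers of $p$ and in the representation numbers $N^{*}(\lambda,p^\nu)$, which by Theorem~\ref{thm:Fourier2}(i)--(iii) are non-negative integers (cardinalities of solution sets of congruences); hence $L_\lambda(k,p)\in\Q$. Likewise $\chi_{\mathcal{D}}(p)\in\{0,\pm1\}$ and the quantities $p^{n/2-k}$, $p^{(n+1)/2-k}$, $p^{n+1-2k}$ are rational (integer exponents, since $k$ is even). As $k>n+2$ one has $1-\chi_{\mathcal{D}}(p)p^{n/2-k}\neq0$ and $1-p^{n+1-2k}\neq0$, so the finite products over $p\mid 2\ell_\lambda Q_0(\lambda)\det S$ (resp.\ the odd analogue) lie in $\Q$. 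It remains to treat the transcendental prefactor.

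The transcendental ingredients are $\ConstCusp$, $\zeta(k)$, the Dirichlet $L$-value $L(k-\tfrac n2,\chi_{\mathcal{D}})$ (for $n$ odd instead $L(k-\tfrac{n+1}2,\chi_{\mathcal{D}})$ and $\zeta(2k-(n+1))$), and the power $Q_0(\lambda)^{k-\frac{n+2}{2}}$. I would invoke three classical facts: (a) Euler's evaluation $\zeta(2m)\in\pi^{2m}\Q^{\times}$, so $\zeta(k)\in\pi^{k}\Q^{\times}$ and, for $n$ odd, $\zeta(2k-(n+1))\in\pi^{2k-(n+1)}\Q^{\times}$ because $2k-(n+1)$ is even; (b) $\Gamma(m)=(m-1)!\in\Z$ for $m\in\N$, while $\Gamma(m+\tfrac12)\in\sqrt\pi\,\Q^{\times}$; (c) the special-value formula for Dirichlet $L$-functions: if the real character $\chi$ is induced by the primitive character of conductor $N$, and $m\geq1$ is an integer with $m\equiv\epsilon\pmod 2$ where $\chi(-1)=(-1)^{\epsilon}$, then $L(m,\chi)\in\pi^{m}\sqrt N\,\Q$, which follows from the functional equation together with rationality of the generalized Bernoulli number $B_{m,\chi}$, after removing the finitely many (rational) Euler factors at primes where $\chi$ is imprimitive. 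A short preliminary step makes (c) applicable: since $\chi_{\mathcal{D}}(-1)$ is the sign of $\mathcal{D}$, which equals $(-1)^{n/2}$ (resp.\ $(-1)^{(n+1)/2}$, using $Q_0(\lambda)>0$), while the argument satisfies $k-\tfrac n2\equiv\tfrac n2\pmod 2$ (resp.\ $k-\tfrac{n+1}2\equiv\tfrac{n+1}2\pmod 2$) because $k$ is even, the required parity condition holds in both cases; note also $k-\tfrac n2>2$ and $2k-(n+1)>1$, so all these values are nonzero.

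Combining these, a quick exponent count shows the powers of $\pi$ cancel exactly ($\ConstCusp$ carries $\pi^{2k-n/2}$, the denominator $\zeta(k)L(k-\tfrac n2,\chi_{\mathcal{D}})$ carries $\pi^{k}\cdot\pi^{k-n/2}$; likewise in the odd case), and the $\sqrt\pi$ factors from the half-integer $\Gamma$-values cancel as well. What is left is the bundle of square roots, and this is the only genuinely delicate point. For $n$ even these are $1/\sqrt{\det S}$ (from $\ConstCusp$) and $\sqrt N$ (from the $L$-value); here $|\mathcal{D}|=\det S$, so writing $\mathcal{D}=\mathcal{D}_{0}g^{2}$ with $\mathcal{D}_{0}$ fundamental and $N=|\mathcal{D}_{0}|$ gives $\det S=Ng^{2}$, whence $\sqrt{\det S}\,\sqrt N=N|g|\in\Z$, and since $k-\frac{n+2}{2}\in\N$ and $Q_0(\lambda)\in\Q$ the power $Q_0(\lambda)^{k-\frac{n+2}{2}}$ is rational; hence $a_{E_{k,S}}(\lambda)\in\Q$. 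For $n$ odd there are in addition a factor $\sqrt2$ in $\ConstCusp$ (a half-integer power of $2\pi$ against $\Gamma(k-\tfrac n2)$) and a factor $\sqrt{n_{0}}$ coming from $Q_0(\lambda)^{k-\frac{n+2}{2}}=n_{0}^{\ell}\sqrt{n_{0}}$ times an integer power of $f$; now $|\mathcal{D}|=2n_{0}\ell_\lambda^{2}\det S$, so with $\mathcal{D}=\mathcal{D}_{0}g^{2}$, $N=|\mathcal{D}_{0}|$ the same bookkeeping collapses $\sqrt2\,\sqrt{n_{0}}\,\sqrt N/\sqrt{\det S}=\sqrt{2Nn_{0}/\det S}$ to the rational number $2n_{0}\ell_\lambda/|g|$, and again $a_{E_{k,S}}(\lambda)\in\Q$. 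I expect this matching of the conductor $N$ of $\chi_{\mathcal{D}}$ to the quantity sitting under the radicals — so that every surd cancels — to be the main obstacle; everything else is either integrality of representation numbers or a citation of classical analytic facts.
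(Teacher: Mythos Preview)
Your approach is correct and is essentially the one the paper intends: the corollary is stated without proof, as an immediate consequence of the explicit formulas in Theorem~\ref{thm:Fourier3}, and you have carefully supplied the verification that every transcendental factor cancels. The only minor comment is that for the boundary coefficients you should (and implicitly do) invoke the $E_{k,S}^{*}$-formula in the theorem preceding Corollary~\ref{lem:EisSing}, which applies since $L$ maximal forces $E_{k,S}=E_{k,S}^{*}$; the Möbius-sum expression in Corollary~\ref{lem:EisSing} itself is less transparently rational.
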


\begin{remark}
 \begin{enumerate}[(i)]
  \item Let $L$ be maximal, $n$ even, $k > n+2$ even and $\lambda \in L_0\dual \cap \Pos_S$ primitive, i.e. $\gcd(S_0 \lambda) = 1$. 
  Proposition 2.19 in \cite{Mocanu_PhD} yields
\begin{align*}
a_{E_{k,S}}(\lambda) =& (-1)^{\lceil \frac{n}{4}\rceil} \cdot \frac{2k}{B_k} \cdot \frac{2(k-\frac{n}{2})}{B_{k-\frac{n}{2},\chi_\mathfrak{f}}} \cdot \frac{|\mathfrak{f}|^{k-\frac{n+2}{2}}}{f_0} \cdot \chi_\mathfrak{f}(\mathfrak{d}) \cdot \left(\frac{Q_0(\lambda)}{\mathfrak{d}}\right)^{k-\frac{n+2}{2}} \sigma_{k-\frac{n+2}{2},\chi_\mathfrak{f}}(\mathfrak{d}) \\
& \cdot \prod_{\begin{subarray}{c} p|\det(S) \\ p \nmid \mathfrak{f}\end{subarray}} \frac{1}{1-\chi_\mathfrak{f}(p)p^{\frac{n}{2}-k}} \cdot \prod_{p | 2 \det(S)} \frac{L_\lambda(k,p)}{1-\chi_\mathcal{D}(p)p^{\frac{n}{2}-k}},
\end{align*}
where $\mathcal{D}= (-1)^{n/2}\det(S)$, $\mathfrak{f}$ is the fundamental discriminant, $\mathfrak{D}=f_0^2 \cdot \mathfrak{f}$ and
\[
\mathfrak{d} = \prod_{\begin{subarray}{c}p | \ell_\lambda^2Q_0(\lambda) \\ p \nmid 2\det(S)\end{subarray}} p^{\nu_p(\ell_\lambda^2Q_0(\lambda))}.
\]
\item Let $L$ be unimodular of rank $n$ and let $k>n+2$ be even. For $\gcd(\lambda)=1$, Woitalla \cite{Woitalla_Fourier} calculated
\[
a_{E^{\bullet}_{k,S}}(\lambda) = \frac{2k}{B_k}\cdot \frac{(2k-n)}{B_{k-n/2}}\cdot \sigma_{k-\frac{n}{2}-1}(Q_0(\lambda)).
\]
\item If $S=(2)$, the Eisenstein series equal the classical Siegel-Eisenstein series.
The Eisenstein series for $O(2,4)$, if $L$ is maximal even, correspond to Siegel-type Hermitian Eisenstein series of degree $2$ over the imaginary quadratic field $\Q(\sqrt{-m})$, where $m$ is squarefree, cf. \cite{We}.
Considering the $D_4$-lattice, we obtain Siegel-Eisenstein series of degree $2$ over the Hamiltonian quaternions, cf. \cite{K3}.
Calculations show that the formulas above yield the same Fourier coefficients, at least up to the finitely many local Euler factors at the "bad" places.
 \end{enumerate}
\end{remark}

%%%%%%%%%%%%%%%%%%%%%%%%%%%%%%%%%%%%%%%%%%%%%%%%%%%%%%%%%%%%%%%
%%%%%%%%%%%%%%%%%%%%%%%%%%%%%%%%%%%%%%%%%%%%%%%%%%%%%%%%%%%%%%%

\section{Eisenstein series and the Maaß space}

\begin{theorem}\label{thm:Eis_Maass}
Let $p\in \P$ and the localization $L_p$ be maximal. Then the local Maaß condition is satisfied.
\end{theorem}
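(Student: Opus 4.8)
The plan is to verify that $E_{k,S}$ satisfies the local Maaß condition at $p$ directly: for every $\lambda=(l,\mu,p^{t}m)\in L_0\dual\cap\overline{\Pos_S}\setminus\{0\}$ with $t\in\N$, $p\nmid m$, writing $\lambda':=(p^{t}l,\mu,m)$,
\[
a_{E_{k,S}}(\lambda)=a_{E_{k,S}}(\lambda')+p^{k-1}a_{E_{k,S}}\!\left(\tfrac1p\lambda\right),
\]
a coefficient at a vector outside $L_0\dual$ being read as $0$. The case $t=0$ is trivial ($\tfrac1p\lambda\notin L_0\dual$ and $\lambda'=\lambda$), so assume $t\geq1$. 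Since $Q_0(\lambda)=Q_0(\lambda')=p^{t}lm-Q(\mu)$ and $Q_0(\tfrac1p\lambda)=p^{-2}Q_0(\lambda)$, the three vectors lie simultaneously on $\partial\Pos_S$ or simultaneously in $\Pos_S$ (or $\tfrac1p\lambda\notin L_0\dual$), so it is enough to check the identity on the singular term $\EkSing$ and on the cusp term $\EkCusp$ separately.

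For $\EkSing$ I would use the explicit coefficient formula of Corollary \ref{lem:EisSing} at $c=e_1$ (where the condition reads $th\in L_0$, i.e. divisibility of $t$ by $N=\operatorname{ord}_{L_0\dual/L_0}(h)$): $a_{\EkSing}(\lambda)$ equals, up to a $k$-dependent constant, a sum $\sum_{d\mid\gcd(S_0\lambda)}d^{k-1}$ times a Möbius-weighted Dirichlet sum. Isolating the $p$-power part of this multiplicative quantity, the claimed identity turns into the elementary recursion for $\sigma_{k-1}$-type sums underlying the classical Maaß relation for elliptic Eisenstein series; this is a short computation which I would not reproduce.

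The cusp term is the substantial part. Combining Theorem \ref{thm:Fourier1} (at $c=e_1$) with the Euler product for $L(s,a(\nu,\cdot))$ and with $L(s,b(\nu,\cdot))=\zeta(s)L(s,a(\nu,\cdot))=\prod_{q\in\P}L_\nu(s,q)$, one obtains for $\nu\in L_0\dual\cap\Pos_S$
\[
a_{\EkCusp}(\nu)=\kappa\,Q_0(\nu)^{k-\frac{n+2}{2}}\prod_{q\in\P}L_\nu(k,q),
\]
with $\kappa$ depending only on $n$, $k$, $\det S$. Lemma \ref{lem:Dirichlet_coeff_prop}(iv), applied with $\beta=p^{t}$ and with $\beta=p$ (both divide the last coordinate $p^{t}m$ of $\lambda$), gives $a(\lambda,\gamma)=a(\lambda',\gamma)=a(\tfrac1p\lambda,\gamma)$ for all $\gamma$ with $p\nmid\gamma$, so the local factors $L_\lambda(k,q)$, $L_{\lambda'}(k,q)$, $L_{\frac1p\lambda}(k,q)$ coincide at every prime $q\neq p$. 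Write $\nu_\lambda:=\nu_p(\gcd(S_0\lambda))$; then $\tfrac1p\lambda\in L_0\dual$ precisely when $\nu_\lambda\geq1$, whereas $p\nmid\gcd(S_0\lambda')$, i.e. $\lambda'$ is $p$-primitive, because $p\nmid m$. If $\nu_\lambda=0$ the $\tfrac1p\lambda$-term vanishes and, cancelling the common factors, the claim reduces to $L_\lambda(k,p)=L_{\lambda'}(k,p)$; this is immediate from \eqref{eq:rep_final}, since then $N^{*}(\nu,p^e)$ reduces to a single representation number $N(\nu,p^e,1)$ and the defining congruences for $\nu=\lambda$ and $\nu=\lambda'$ coincide because $l\cdot p^{t}m=p^{t}l\cdot m$. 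If $\nu_\lambda\geq1$, the formula applies to all three vectors and, using $Q_0(\tfrac1p\lambda)^{k-\frac{n+2}{2}}=p^{n+2-2k}Q_0(\lambda)^{k-\frac{n+2}{2}}$, the claim reduces to
\[
L_\lambda(k,p)=L_{\lambda'}(k,p)+p^{n+1-k}L_{\frac1p\lambda}(k,p);
\]
in view of $L_\nu(k,p)=(1-p^{n-k})\sum_{e\geq0}N^{*}(\nu,p^e)p^{e(1-k)}$ (the form the $p$-Euler factor takes after invoking Theorem \ref{thm:Fourier2}(iv)) and of $p^{n+1-k}=p^np^{1-k}$, this in turn follows from the recursion
\[
N^{*}(\lambda,p^e)=N^{*}(\lambda',p^e)+p^{n}\,N^{*}\!\left(\tfrac1p\lambda,p^{e-1}\right),\qquad e\geq0,
\]
with $N^{*}(\,\cdot\,,p^{-1}):=0$.

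Proving this last recursion is the final step, and the only place where maximality of $L_p$ enters — via Proposition \ref{prop:conv_form}, on which Theorem \ref{thm:Fourier2} rests. I would argue by the case analysis of Theorem \ref{thm:Fourier2}(i)--(iii) according to the position of $e$ relative to $\nu_\lambda$. The conceptual picture is a primitive/imprimitive split of the vectors counted by $N^{*}(\lambda,\cdot)$: those primitive at $p$ are exactly the ones counted by the numbers $N^{*}(\lambda',p^e)$ (recall $\lambda'$ is $p$-primitive), and those divisible by $p$ match, via $v=pv'$, the numbers $N^{*}(\tfrac1p\lambda,p^{e-1})$, the factor $p^{n}$ arising from the enlargement of the quadratic congruence's modulus in passing from $\lambda$ to $\tfrac1p\lambda$. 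I expect the main obstacle to be precisely this bookkeeping — tracking the shift $\nu_\lambda\mapsto\nu_\lambda-1$, the level exponent $r=\nu_p(\ell_\lambda)$, and the moduli $p^{e}$ and $p^{e+2r}$ occurring in Theorem \ref{thm:Fourier2} — which runs parallel to the primitive-$\lambda$ computation of \cite{Bruinier_Eisenstein} and \cite{Mocanu_PhD}, the non-primitive case adding only this $p$-divisibility stratification.
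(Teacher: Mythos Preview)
Your strategy---split into singular and cusp parts, and for the cusp part reduce via the Euler product to the recursion $N^{*}(\lambda,p^{e})=N^{*}(\lambda',p^{e})+p^{n}N^{*}(\tfrac{1}{p}\lambda,p^{e-1})$ at~$p$---is exactly the paper's. The paper establishes that recursion by splitting $N^{*}(\lambda,p^{\nu})=\sum_{t=0}^{\min(\nu,\nu_\lambda)}N(\lambda,p^{\nu},p^{t})$ according to the auxiliary index~$t$: the top summand $t=\min(\nu,\nu_\lambda)$ is identified with $N^{*}(\lambda',p^{\nu})$ via Theorem~\ref{thm:Fourier2}(iii), while for $0\leq t<\min(\nu,\nu_\lambda)$ one checks directly from \eqref{eq:rep_final} that $N(\lambda,p^{\nu},p^{t})=p^{n}N(\tfrac{1}{p}\lambda,p^{\nu-1},p^{t})$. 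This is a split by~$t$, not a primitive/imprimitive dichotomy on the vectors $v\in\Z^{n}/p^{\nu}\Z^{n}$ being counted (the congruences defining $N(\lambda,p^{\nu},p^{t})$ for different~$t$ are genuinely different conditions, not a nested filtration), so when filling in details you should follow the $t$-decomposition rather than the heuristic you sketch.

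There is one real oversight. Contrary to your assertion that maximality of $L_p$ enters only through Proposition~\ref{prop:conv_form}, it is also needed for the singular term. In Corollary~\ref{lem:EisSing} (at $c=e_1$, so the relevant $t$ is~$0$) the inner M\"obius--Dirichlet sum carries the congruence $\alpha\,\tfrac{\varepsilon}{d}\equiv 0\pmod{N}$ with $N=\operatorname{ord}_{L_0\dual/L_0}(h)$ and $h=\tfrac{1}{\varepsilon}\lambda$; this congruence depends on~$d$, so $a_{\EkSing}(\lambda)$ is \emph{not} of the form $\bigl(\sum_{d\mid\varepsilon}d^{k-1}\bigr)$ times a single Dirichlet factor, and ``isolating the $p$-power part of this multiplicative quantity'' does not go through as stated. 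The paper's fix is to invoke Proposition~\ref{prop:local_appl}: since $Q_0(h)=0\in\Z$ and $L_p$ is maximal, necessarily $p\nmid N$, so the congruence is insensitive to the $p$-power in $\tfrac{\varepsilon}{d}$; only after this does the $p$-part separate and the elementary $\sigma_{k-1}$-type recursion you have in mind actually apply.
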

\begin{proof}
The singular part $\EkSing$ and the cusp term $\EkCusp$ can be treated separately.

We prove the local Maaß condition for both terms.

At first, we prove the Maaß condition for the singular term.
Let $0\neq\lambda \in L_0\dual \cap \partial \Pos_S$ and let $\varepsilon=\gcd(S_0 \lambda)= p^r \cdot \varepsilon'$, where $p \nmid \varepsilon'$. Recall the representation of the Fourier coefficient in Corollary \ref{lem:EisSing}.
We put $h:=\frac{1}{\varepsilon}\lambda \in L_0\dual$, so there exists exactly one cusp $c^{*}$ such that $h \in L_0\dual \pm c^{*}$. Let $N:= \operatorname{ord}_{L_0\dual/L_0}(h)$. 
 The Fourier coefficients satisfy
$a_f(\lambda) = a_f(K \lambda)$ for all $K \in O^{+}(S_0;\Z)$. In virtue of Lemma \ref{lem:Dirichlet_coeff_prop}(iii) without loss of generality we may assume that $\lambda = (l,\mu,p^r m)$, where $p \nmid m$.
To prove the local Maaß condition, we need to verify
\begin{align}\label{eq:EisSingMaass2}
a_{\EkSing}(\lambda) = a_{\EkSing}(\lambda^{*}) + p^{k-1} a_{\EkSing}\left(\frac{1}{p} \lambda\right), \, \lambda^{*}=(p^r l, \mu, m).
\end{align}
The left hand side of the equation is
\begin{align*}
a_{\EkSing}(\lambda)&=\frac{(-2\pi i)^k}{\Gamma(k)} \cdot\sum_{d|\varepsilon'}\sum_{j=0}^r d^{k-1}p^{j(k-1)} \sum_{\begin{subarray}{c}\alpha \in \N\\ \alpha \frac{\varepsilon'}{d}p^{r-j} \equiv 0 \mod N\end{subarray}} \mu(\alpha)\alpha^{-k}.
\end{align*}
Now let the localization $L_p$ be maximal. Then $p \nmid N= \operatorname{ord}_{L_0\dual/L_0}(h)$ due to Proposition \ref{prop:local_appl}.
In particular, the congruence condition $\frac{\varepsilon'}{d}p^{r-j} \equiv 0 \mod N$ is equivalent to $\frac{\varepsilon'}{d} \equiv 0 \mod N$, since $p$ is invertible in $\Z/N\Z$.
Using
\begin{align*}
a_{\EkSing}\left(\frac{1}{p}\lambda\right)&= p^{1-k} \cdot \frac{(-2\pi i)^k}{\Gamma(k)}  \cdot\sum_{d|\varepsilon'}\sum_{j=1}^r d^{k-1}p^{j(k-1)} \sum_{\begin{subarray}{c}\alpha \in \N\\ \alpha \frac{\varepsilon'}{d}p^{r-j} \equiv 0 \mod N\end{subarray}} \mu(\alpha)\alpha^{-k}.
\end{align*}
and noting that $\lambda^{*}$ satisfies $\gcd(S_0 \lambda^{*})= \varepsilon'$, one recognizes (\ref{eq:EisSingMaass2}).

Next, consider the cusp term.
Let $\lambda = (l,\mu,p^rm) \in L_0\dual \cap \Pos_S$ and $p \in \P$, $r \in \N$ and $p\nmid m$.

Again, we prove the local Maaß condition.
Recall that
\[
a_{\EkCusp}(\lambda) = \frac{\ConstCusp}{\zeta(k)} \cdot Q_0(\lambda)^{k-\frac{n+2}{2}} \cdot L(k,b(\lambda,\cdot)).
\]
We split
\[
L(k,b(\lambda,\cdot)) = \prod_{\begin{subarray}{c}q \in \P \\ p \neq q\end{subarray}}
\left(\sum_{\nu \geq 0}  N^{*}(\lambda,q^\nu) q^{\nu(1-k)}\right) \cdot \left(\sum_{\nu \geq 0}  N^{*}(\lambda,p^\nu) p^{\nu(1-k)}\right).
\]
By virtue of Lemma \ref{lem:Dirichlet_coeff_prop}(iv)
\[
a(\lambda,q^\nu) = a(\lambda^{*},q^\nu)=a\left(\frac{1}{p}\lambda,q^\nu\right), \, q\neq p.
\]
This property leads to $b(\lambda,q^{\nu}) = (1*a)(\lambda,q^{\nu})$ for all $q\neq p$ and $\nu \geq 0$.

Deal with the fixed prime $p$. Here, we have
\[
N^{*}(\lambda^{*},p^\nu) = \#\left\{ v \in \Z^n/p^{\nu+r}\Z^n; \begin{array}{l} \, \,\,\, v\, \, \, \, \equiv -p^{r+ \nu_\lambda} \mu \,\,\,\,\,\,\,\,\,\, \mod p^r \\ Q(v) \equiv - Q_0(p^{r} \lambda^{*}) \mod p^{\nu+2r}\end{array} \right\}
\]
as a result of $\nu_p(S_0 \lambda^{*}) = 0$ and Theorem \ref{thm:Fourier2}.
Comparing this formula with Theorem \ref{thm:Fourier2} (iii) leads to
\[
N(\lambda,p^\nu,p^{\min(\nu_\lambda,\nu)}) = N^{*}(\lambda^{*},p^\nu).
\]
Now let $0\leq t < \min(\nu,\nu_\lambda)$. To avoid the study of two cases we here consider the formula (\ref{eq:rep_final}) in the proof of Theorem \ref{thm:Fourier2}. Let $\nu_\lambda:=\nu_p(\gcd(S_0 \lambda))$ as already used before.
\[
N(\lambda,p^\nu,p^t)=\#\left\{ \mathfrak{l} \in \Z^n/p^\nu\Z^n; \begin{array}{l} Q(\mathfrak{l})\equiv 0 \mod p^t,\\ \frac{m Q(\mathfrak{l})}{p^t}-(\mathfrak{l},\mu)+p^{t} l \equiv 0 \mod p^\nu\end{array} \right\}.
\]
In the second congruence the left side is always divisible by $p$ and the representation number reads
\[
N(\lambda,p^\nu,p^t)=\#\left\{ \mathfrak{l} \in \Z^n/p^\nu\Z^n; \begin{array}{l} Q(\mathfrak{l})\equiv 0 \mod p^t,\\ \frac{m Q(\mathfrak{l})}{p^{t+1}}-\frac{1}{p}(\mathfrak{l},\mu)+p^{t-1} l \equiv 0 \mod p^{\nu-1}\end{array} \right\}.
\]
If we compare it with
\[
N\left(\frac{1}{p}\lambda,p^{\nu-1},p^{t}\right)=\#\left\{ \mathfrak{l} \in \Z^n/p^{\nu-1}\Z^n; \begin{array}{l} Q(\mathfrak{l})\equiv 0 \mod p^t,\\ \frac{m Q(\mathfrak{l})}{p^{t+1}}-(\mathfrak{l},\frac{1}{p}\mu)+p^{t-1} l \equiv 0 \mod p^{\nu-1}\end{array} \right\},
\]
we see the same congruences.

If $\mathfrak{l} \mod p^{\nu-1}$ satisfies both of these congruences, then $\mathfrak{l} +p^{\nu-1}v$ satisfies the congruences above, where the vectors $v$ run through a system $\mod p$. Hence,
\[
N(\lambda,p^\nu,p^t) = p^n \cdot N\left(\frac{1}{p}\lambda,p^{\nu-1},p^t\right)
\]
and
\[
\sum_{t=0}^{\min(\nu,\nu_\lambda)-1} N(\lambda,p^\nu,p^t) = \sum_{t=0}^{\min(\nu,\nu_\lambda)-1} p^n N\left(\frac{1}{p}\lambda,p^{\nu-1},p^t\right)= p^nN^{*}\left(\frac{1}{p}\lambda,p^{\nu-1}\right).
\]
This yields the local Maaß condition for $p$.
\end{proof}

The combination of Theorem \ref{thm:Eis_Maass} and Proposition \ref{prop:local} immediately leads to
\begin{corollary}\label{cor:Eis_Maass_maximal}
Let $L$ be maximal. Then $E_{k,S} \in \M_k^{*}(\Gamma_S).$
\end{corollary}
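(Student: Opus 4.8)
The plan is to deduce the corollary formally from the three ingredients already in place: Theorem~\ref{thm:Eis_Maass} (the local Maaß condition holds at every prime where the localization is maximal), Proposition~\ref{prop:local} (maximality is a local property), and the local--global criterion for the Maaß space (the Lemma following Definition~\ref{def:Maass_space}), which states that an $f \in \M_k(\Gamma)$ lies in $\M_k^{*}(\Gamma)$ as soon as it satisfies the local Maaß condition at \emph{every} prime $p$. So the corollary is essentially an assembly argument, with all the analytic content hidden in Theorem~\ref{thm:Eis_Maass}.

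First I would record that $E_{k,S}$ is modular not merely for $\widetilde{\Gamma}_S$ but for the full group $\Gamma_S$, since $\M_k^{*}(\Gamma_S)$ is by definition a subspace of $\M_k(\Gamma_S)$. This is read off the explicit formula of Theorem~\ref{thm:Eisenstein_explicit}: for $c = e_1$ one has $E_{k,S}(z) = \tfrac12 \sum_g j(g,z)^{-k}$ summed over $g \in L_1\dual$ with $Q_1(g) = 0$ and $\gcd(S_1 g) = 1$, and any $M \in \Gamma_S = O^{+}(S_1;\Z)$ stabilizes $L_1$ and $S_1$, hence $L_1\dual = S_1^{-1}\Z^{n+4}$, the form $Q_1$, and each value $\gcd(S_1 g)$; thus $g \mapsto M^{-1}g$ permutes the index set, and the cocycle relation for $j(\cdot,\cdot)$ turns this permutation into the automorphy factor $j(M,z)^{-k}$, giving $E_{k,S}|_k M = E_{k,S}$. (When $L$ is maximal there is only one cusp class and $E_{k,S} = E_{k,S}^{*}$, so no other Eisenstein series enters.) Equivalently, one may apply the local--global criterion with $\Gamma = \widetilde{\Gamma}_S$ and note that the Maaß recursion is a condition on the Fourier coefficients alone, so membership in the Maaß space is compatible with upgrading the group of invariance.

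Then the argument closes: since $L$ is maximal, Proposition~\ref{prop:local} gives that $L_p$ is maximal for every prime $p$; hence Theorem~\ref{thm:Eis_Maass} applies at \emph{every} $p$, so $E_{k,S}$ satisfies the local Maaß condition at all primes; and the local--global criterion then yields $E_{k,S} \in \M_k^{*}(\Gamma_S)$. The substantive work lives entirely inside Theorem~\ref{thm:Eis_Maass} (splitting $E_{k,S} = \EkSing + \EkCusp$ and matching the representation numbers of Theorem~\ref{thm:Fourier2}), so for the corollary there is no genuine obstacle; the only points deserving a line of care are that the ``extend Fourier coefficients by zero'' convention is used consistently in Theorem~\ref{thm:Eis_Maass} and in the local Maaß criterion, and the full $\Gamma_S$-modularity noted above.
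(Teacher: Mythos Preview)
Your argument is correct and matches the paper's own proof, which simply says the corollary follows immediately from combining Theorem~\ref{thm:Eis_Maass} with Proposition~\ref{prop:local} (implicitly via the local--global Lemma after Definition~\ref{def:Maass_space}). Your extra paragraph verifying full $\Gamma_S$-invariance of $E_{k,S}$ is a point the paper leaves implicit, so you are being slightly more careful, but the overall route is the same.
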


Theorem \ref{thm:Eis_Maass} always holds at almost all places $p$. Using an analogue of \cite{HK} for $O(2,n+2)$ would give $E_{k,S} \in \M_k^{*}(\Gamma_S)$, even if $L$ is not maximal.

%============================================
%\printbibliography

\bibliographystyle{plain}
\bibliography{biblio}
\end{document}